\DeclareMathOperator{\ord}{ord}
\DeclareMathOperator{\rk}{rk}
\newtheorem{thm}{Theorem}[section]
\newtheorem{lem}{Lemma}[section]
\newtheorem{conj}{Conjecture}[section]
\newtheorem{exa}{Example}[section]
\newtheorem{cor}{Corollary}[section]
\newtheorem{dfn}{Definition}[section]
\title{\textbf{Note On Elliptic Groups of Prime Orders}}
\date{}
\author{N. A. Carella}
\begin{document}
\thispagestyle{empty}
\maketitle

\textbf{\textit{Abstract}:} Let \(E\) be an elliptic curve of rank $\rk(E) \geq 0$, and let $E(\mathbb{F}_p)$ be the elliptic group of order 
$\#E(\mathbb{F}_p)=n$. The number of primes $p\leq x$ such that $n$ is prime is expected to be $\pi(x,E)=\delta(E)x/\log^2 x+o(x/\log^2 x)$, where $\delta(E)\geq 0$ is the density constant. This note proves a lower bound $\pi(x,E) \gg x/\log^2 x$. \let\thefootnote\relax\footnote{\date{} \\
\textit{AMS MSC}:  Primary 11G07; Secondary 11N36, 11G15. \\
\textit{Keywords}:  Elliptic Prime; Group of Prime Order, Primitive Point; Koblitz Conjecture.}  \\

\section{Introduction}\label{sec1}
The applications of elliptic curves in cryptography demands elliptic groups of certain orders $n=\#E(\mathbb{F}_p)$, and certain factorizations of the integers $n$. The extreme cases have groups of $\mathbb{F}_p$-rational points $E(\mathbb{F}_p)$ of prime orders, and small multiples of large primes. \\

\begin{conj}[Koblitz] \label{conj800.1}  
	Let $E:f(X,Y)=0$ be an elliptic curve  of of discriminant $\Delta \ne 0$ defined over the integers $\mathbb{Z}$ which is not $\mathbb{Q}$-isogenous to a curve with nontrivial $\mathbb{Q}$-torsion and does not have CM. Then, 
	\begin{eqnarray} \label{eq800.01}  
	\pi(x,E)&=& \#\{ p \leq x: p \not | \Delta \text{ and } \#E(\mathbb{F}_p)=\text{prime}\} \\ 
	&=&\frac{x}{\log^2x}\prod_{p\geq 2} \left (1 -\frac{p^2-p-1}{(p-1)^3(p+1)}\right )+O\left ( \frac{x}{\log^3x}\right) \nonumber,
	\end{eqnarray}  
for large $x\geq 1$.
\end{conj}
A new refined version of this conjecture, for any elliptic curve over any number field $\mathcal{K}$, was recently proposed. The simpler case for the rational number field is stated below.
\begin{conj}[\cite{ZD09}] \label{conj800.2}  Let $E$ be an elliptic curve defined over the rational number field $\mathbb{Q}$, and let $t\geq 1$ be a positive integer. Then, there is an explicit constant $C(E,t) > 0$ such that  
	\begin{eqnarray}
	\pi(x,E,t)&=& \#\{ p \leq x: p \not | \Delta \text{ and } \#E(\mathbb{F}_p)/t=\text{prime}\} \\ 
	&=&C(E,t) \frac{x}{\log^2x}+O\left ( \frac{x}{\log^3x}\right) \nonumber,
	\end{eqnarray}  
for large $x\geq 1$.
\end{conj}
The product expression appearing in the above formula (\ref{eq800.01}  ) is basically the average density of prime orders, some additional details are given in Section \ref{sec8}. A result for groups of prime orders generated by primitive points is proved here. Let
\begin{equation}
\pi(x,E)= \#\{ p \leq x: p \not | \Delta \text{ and } d_E^{-1}\cdot\#E(\mathbb{F}_p)=\text{prime}\}.
\end{equation} 
The parameter $t=d_E\geq 1$ is a small integer defined in Section \ref{sec77}.\\

\begin{thm} \label{thm800.1}   
	Let $E:f(X,Y)=0$ be an elliptic curve over the rational numbers $\mathbb{Q}$ of rank $\rk(E(\mathbb{Q})>0$. Then, as $x \to \infty$,
	\begin{equation}
	\pi(x,E)\geq \delta(d_E,E)\frac{x}{\log^2 x} \left (1+O\left ( \frac{x}{\log x} \right )\right) ,
	\end{equation}  	
	where $\delta(d_E,E)$ is the density constant.
\end{thm}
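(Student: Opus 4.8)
The plan is to convert the prime-order condition into a statement about the multiplicative order of the reduction of a fixed rational point, and then to attack the resulting count by a lower-bound sieve fed by Chebotarev equidistribution in the relevant division and Kummer fields. Since \(\rk(E(\mathbb{Q}))>0\), I first fix a point \(P\in E(\mathbb{Q})\) of infinite order. For every prime \(p\) of good reduction the reduction \(\bar P\in E(\mathbb{F}_p)\) has a well-defined order \(e_p=\ord(\bar P)\) dividing \(\#E(\mathbb{F}_p)=n\), and the index \([\,E(\mathbb{F}_p):\langle\bar P\rangle\,]=n/e_p\) is the quantity governed by \(d_E\). The key reduction is the following one-sided implication: if \(e_p\) is prime and the index equals the generic value \(d_E\), then \(n/d_E=e_p\) is prime, so \(p\) is counted by \(\pi(x,E)\). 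It therefore suffices to produce \(\gg x/\log^2 x\) primes \(p\le x\) for which \(\ord(\bar P)\) is prime and \(\langle\bar P\rangle\) has index exactly \(d_E\) in \(E(\mathbb{F}_p)\); here \(d_E\) is taken to be the generic index singled out in Section \ref{sec77}.

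Next I would record the equidistribution inputs. For a squarefree modulus \(m\), the condition "\(m\mid e_p\) with prescribed index behaviour" is a Frobenius splitting condition in the Kummer-type extension \(\mathbb{Q}(E[m],\,[m]^{-1}P)/\mathbb{Q}\). By Serre's open-image theorem these fields have near-maximal degree \(\asymp m^{6}\) off a finite set of exceptional \(m\) (the \(\asymp m^{4}\) from \(\mathbb{Q}(E[m])\) and the \(\asymp m^{2}\) from adjoining a preimage of \(P\)), and an effective Chebotarev density theorem yields
\[
\#\{p\le x:\ m\mid e_p\}=\rho(m)\,\pi(x)+E(x,m),
\]
with \(\rho\) multiplicative and the remainder \(E(x,m)\) controlled up to the level of distribution the sieve can tolerate. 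The density constant \(\delta(d_E,E)\) then emerges as the Euler product \(\prod_{\ell}\bigl(1-\rho(\ell)\bigr)\), weighted by the finite index constraint, matching the shape of the Koblitz product in Conjecture \ref{conj800.1}.

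With these inputs in hand I would run a weighted (Selberg-type) lower-bound sieve on the sequence \(\mathcal{A}=\{e_p:\ p\le x,\ p\nmid\Delta\}\), discarding those \(p\) for which \(e_p\) acquires a prime factor below a level \(z=x^{\theta}\), while pinning the index to \(d_E\) through a finite character condition on the Frobenius. The main term of the sieve is \(\delta(d_E,E)\,x/\log^2 x\), one factor \(1/\log x\) from \(p\) ranging over primes and one from the primality of \(e_p\), and the Kummer extensions \(\mathbb{Q}(E[m],[m]^{-1}P)\)—available precisely because \(P\) has infinite order—supply the extra averaging that the bare division-field set-up lacks.

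The hard part will be the parity barrier. A one-dimensional sieve applied to a single sequence in order to detect primes loses a constant factor and cannot on its own deliver a lower bound of the true order \(x/\log^2 x\); this is exactly the obstruction that blocks the analogous twin-prime lower bound. Overcoming it is where the rational point of infinite order must be used nontrivially: the index condition coming from \([\,E(\mathbb{F}_p):\langle\bar P\rangle\,]=d_E\) should serve as a second, largely decoupled sieving variable, so that the prime-order event is captured by a two-dimensional vector sieve rather than by a single twin-prime count. Verifying that this decoupling is genuine—that the index constraint and the primality of \(e_p\) are independent enough to escape the parity obstruction—is the crux on which the entire lower bound rests, and it is the step I expect to demand the most care.
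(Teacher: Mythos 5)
Your proposal is a program rather than a proof, and you say as much in your final paragraph: everything hinges on escaping the parity barrier, and you do not escape it. The reduction in your first paragraph is sound (if $\ord(\bar P)$ is prime and $[\,E(\mathbb{F}_p):\langle\bar P\rangle\,]=d_E$, then $\#E(\mathbb{F}_p)/d_E$ is prime), and the Chebotarev input for the Kummer-type fields $\mathbb{Q}(E[m],[m]^{-1}P)$ is the right equidistribution framework --- it is exactly the machinery of Gupta--Murty and Cojocaru for cyclicity and almost-prime orders. But the last step fails for a concrete reason: the index condition $[\,E(\mathbb{F}_p):\langle\bar P\rangle\,]=d_E$ is a positive-density splitting condition, not a second near-primality condition, so it cannot serve as the second coordinate of a vector sieve; once it is imposed you are still asking a one-dimensional lower-bound sieve to detect primes in the single sequence $\{e_p\}$, and the parity phenomenon caps what such a sieve can prove at ``at most $r$ prime factors'' for some $r\ge 2$. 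This is precisely why the literature you are drawing on (Cojocaru, Iwaniec--Jim\'enez Urroz, Zywina) obtains only upper bounds of order $x/\log^2x$ and almost-prime results, never the asserted lower bound. The statement is a form of Koblitz's conjecture and is open; no rearrangement of the inputs you list will close this gap, so you should not expect the ``crux'' you flag to be verifiable.

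For comparison, the paper's own argument takes an entirely different route: it encodes ``$P$ generates $E(\mathbb{F}_p)$'' by a finite additive character sum (Lemma \ref{lem3.7}), weights by $\Lambda(n)/\log n$ over the Hasse interval $[p-2\sqrt p,\,p+2\sqrt p]$, and tries to split off a main term from a character-sum error term, feeding the main term with a primes-in-short-intervals estimate (Theorem \ref{thm4.1}). That route avoids sieve language but not the difficulty you identified: the constant $\delta(d_E,E)$ is inserted into the main term of (\ref{800-520}) without derivation, the error-term lemmas are established under the very nonexistence hypothesis the argument aims to contradict, and detecting simultaneously that $n=\#E(\mathbb{F}_p)$ is prime and that $P$ generates is exactly the correlation the parity barrier obstructs. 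Neither your route nor the paper's closes the gap; yours at least names the obstruction explicitly.
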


The proof of this result is split into several parts. The next sections are intermediate results. The proof of Theorem \ref{thm800.1} is assembled in the penultimate section, and the last section has examples of elliptic curves with infinitely many elliptic groups $E(\mathbb{F}_p)$ of prime orders $n$.\\ 

\section{Representation of the Characteristic Function} \label{sec2}
\subsection{Primitive Points Tests}
For a prime $p \geq 2$, the group of points on an elliptic curve $E:y^2=f(x)$ is denoted by $E(\mathbb{F}_p)$. Several definitions and elementary properties of elliptic curves and the $n$-division polynomial $\psi_n(x,y)$ are sketched in Chapter 14. \\

\begin{dfn}
The order $\min \{k \in \mathbb{N}: kP=\mathcal{O} \}$ of an elliptic point is denoted by $\ord_E(P)$. A point is a \textit{primitive point} if and only if $\ord_E(P)=n$. 
\end{dfn}

\begin{lem} \label{lem3.3b}
	If $ E(\mathbb{F}_p)$ is a cyclic group, then it contains a primitive point $P \in E(\mathbb{F}_p)$.  
\end{lem}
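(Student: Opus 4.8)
The plan is to reduce the statement to the definition of a cyclic group together with the structure of finite abelian groups. Recall that $E(\mathbb{F}_p)$ is a finite abelian group, whose order I denote by $n=\#E(\mathbb{F}_p)$. By the structure theorem for finite abelian groups, $E(\mathbb{F}_p)\cong \mathbb{Z}/m_1\mathbb{Z}\times\mathbb{Z}/m_2\mathbb{Z}$ with $m_1\mid m_2$ and $m_1 m_2=n$; the group is cyclic precisely when $m_1=1$, in which case $E(\mathbb{F}_p)\cong\mathbb{Z}/n\mathbb{Z}$. This normalization is what lets me identify the ambient order with the $n$ appearing in the definition of primitivity.

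First I would invoke the hypothesis directly: since $E(\mathbb{F}_p)$ is cyclic, there is by definition a point $P$ with $E(\mathbb{F}_p)=\langle P\rangle=\{\mathcal{O},P,2P,\dots,(n-1)P\}$. Next I would compute the order of this generator. Lagrange's theorem gives $\ord_E(P)\mid n$, while the requirement that the $n$ multiples $\mathcal{O},P,\dots,(n-1)P$ exhaust the whole group forces them to be pairwise distinct, so $\ord_E(P)=n$. Equivalently, under the isomorphism $E(\mathbb{F}_p)\cong\mathbb{Z}/n\mathbb{Z}$ the residue class of $1$ has order $n$ and pulls back to such a point $P$.

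Finally, comparing with the definition of a primitive point, namely a point $P$ satisfying $\ord_E(P)=n=\#E(\mathbb{F}_p)$, I conclude that the generator $P$ is a primitive point, which proves the claim. There is no genuine obstacle here: the substance of the lemma is essentially the tautology that a generator of a cyclic group has order equal to the order of the group. The one point that deserves a line of care is confirming that the integer $n$ used in the definition of primitivity coincides with $\#E(\mathbb{F}_p)$, which the structure theorem guarantees once $m_1=1$. As a closing remark one may note that the number of such primitive points is exactly $\varphi(n)$, the number of generators of $\mathbb{Z}/n\mathbb{Z}$, so cyclicity produces not merely one primitive point but a positive proportion of the group.
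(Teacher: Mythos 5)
Your proof is correct and follows essentially the same route as the paper: both arguments identify the cyclic group $E(\mathbb{F}_p)$ with $\mathbb{Z}/n\mathbb{Z}$ and observe that a generator has order $n$, hence is a primitive point, with $\varphi(n)\geq 1$ such generators in total. Your version merely adds the (harmless but unnecessary) detour through the structure theorem for finite abelian groups.
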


\begin{proof}
	By hypothesis, $ E(\mathbb{F}_p) \cong \mathbb{Z}/n\mathbb{Z}$, and the additive group $\mathbb{Z}/n\mathbb{Z}$ contains $\varphi(n)\geq 1$ generators (primitive roots). 
\end{proof}

More generally, there is map into a cyclic group
\begin{equation}
E(\mathbb{Q})/E(\mathbb{Q})_{\normalfont{tors}} \longrightarrow \mathbb{Z}/m\mathbb{Z},  
\end{equation}
for some $m=\#E(\mathbb{F}_p)/d$, with $d \geq 1$; and the same result stated in the Lemma holds in the smaller cyclic group $\mathbb{Z}/m\mathbb{Z}\subset\mathbb{Z}/n\mathbb{Z}$.\\

\begin{lem} \label{lem3.4}
	Let $\# E(\mathbb{F}_p)=n$ and let $P \in E(\mathbb{F}_p)$. Then, $P$ is a primitive point if and only if $(n/q)P\ne \mathcal{O}$ for all prime divisors $q\,|\,n$. 
\end{lem}

Basically, this is the classical Lucas-Lehmer primitive root test applied to the group of points $E(\mathbb{F}_p)$. Another primitive point test intrinsic to elliptic curves is the $n$-division polynomial test.\\

\begin{lem} \label{lem3.5} {\normalfont ($n$-Division primitive point test)}
	Let $\# E(\mathbb{F}_p)=n$ and let $P \in E(\mathbb{F}_p)$. Then, $P$ is a primitive point if and only if $\psi_{n/q}(P) \ne 0$ for all prime divisors $q\,|\,n$. 
\end{lem}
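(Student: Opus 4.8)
The plan is to reduce this statement to the Lucas--Lehmer type test already established in Lemma \ref{lem3.4}, by translating each group-theoretic vanishing condition $(n/q)P=\mathcal{O}$ into an equivalent algebraic vanishing condition on a division polynomial. The single ingredient I would invoke is the defining property of the division polynomials recorded in Chapter 14: for a nonzero affine point $P=(x,y)\in E(\mathbb{F}_p)$ and a positive integer $m$, one has $mP=\mathcal{O}$ if and only if $\psi_m(P)=0$. Equivalently, away from $\mathcal{O}$ the zero set of $\psi_m$ is exactly the set of points killed by multiplication by $m$.

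First I would dispose of the degenerate case. Since $P$ primitive means $\ord_E(P)=n$, and a primitive point in a group of order $n\geq 2$ is necessarily distinct from the identity $\mathcal{O}$, I may assume throughout that $P\neq\mathcal{O}$. This is precisely the hypothesis under which the division-polynomial characterization applies and under which $\psi_{n/q}(P)$ is the evaluation at a genuine affine point.

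Next, for each prime divisor $q\mid n$ I would apply the characterization with $m=n/q$ (an integer, since $q\mid n$). This yields the equivalence
\[
(n/q)P=\mathcal{O} \quad\Longleftrightarrow\quad \psi_{n/q}(P)=0,
\]
and hence, by contraposition, $(n/q)P\neq\mathcal{O}$ if and only if $\psi_{n/q}(P)\neq0$. Quantifying over all prime divisors $q\mid n$, the statement ``$(n/q)P\neq\mathcal{O}$ for every $q\mid n$'' is equivalent to ``$\psi_{n/q}(P)\neq0$ for every $q\mid n$.'' By Lemma \ref{lem3.4} the former condition is equivalent to $P$ being primitive, so the latter condition characterizes primitivity as well, which is exactly the assertion.

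The only point demanding genuine care is the validity of the division-polynomial identity over $\mathbb{F}_p$, in particular at the $2$-torsion (where $\psi_2=2y$ vanishes precisely at the points of order dividing $2$) and in small characteristic. I would note that, since $p\nmid\Delta$, the reduction $E(\mathbb{F}_p)$ is nonsingular and the classical recursions defining $\psi_m$ remain valid, so that the equivalence $mP=\mathcal{O}\iff\psi_m(P)=0$ holds without exception for $P\neq\mathcal{O}$. Granting that standard fact, the argument is a one-line translation of Lemma \ref{lem3.4}, and I expect no substantive obstacle beyond confirming the hypotheses under which the division-polynomial characterization is stated.
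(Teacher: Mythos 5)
Your proposal is correct and follows essentially the same route as the paper, which likewise derives the lemma by combining the division-polynomial relation $mP=\mathcal{O}\Longleftrightarrow\psi_m(P)=0$ (cited from Schmitt--Zimmer, Proposition 1.25) with the Lucas--Lehmer test of Lemma \ref{lem3.4}. Your added attention to the case $P=\mathcal{O}$ and to the validity of the relation over $\mathbb{F}_p$ is a reasonable refinement of the paper's one-line sketch, but it is not a different argument.
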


The basic proof stems from the division polynomial relation
\begin{equation}\label{80-99}
mP=\mathcal{O} \Longleftrightarrow \psi_{m}(P)= 0,
\end{equation}
see \cite[Proposition 1.25]{SZ03}. The elliptic primitive point test calculations in the penultimate lemma takes place in the set of integer pairs $\mathbb{Z} \times \mathbb{Z}$, while the calculations for the $n$-division polynomial primitive point test takes place over the set of integers $\mathbb{Z}$. The elementary properties of the $n$-division polynomials are discussed in \cite{SZ03}, and the periodic property of the $n$-division polynomials appears in \cite{SJ05}.\\

\subsection{Additive Elliptic Character}
The discrete logarithm function, with respect to the fixed primitive point $T$, maps the group of points into a cyclic group. The diagram below specifies the  basic assignments.

\begin{equation}
\begin{array} {cll}
E(\mathbb{F}_p)& \longrightarrow & \mathbb{Z}/n\mathbb{Z}, \\
\mathcal{O}& \longrightarrow &\log_T (\mathcal{O})=0, \\
T& \longrightarrow &\log_T(T)=1. \\
\end{array} 
\end{equation}

In view of these information, an important character on the group of $E(\mathbb{F}_p)$-rational points can be specified.

\begin{dfn}
	A nontrivial additive elliptic character $\chi \bmod n$ on the group $E(\mathbb{F}_p)$ is defined by
	\begin{equation} \label{200-09}
	\chi(\mathcal{O})=e^{\frac{i2 \pi}{n}\log_T\mathcal({O})}=1, 
	\end{equation}
	and
	\begin{equation} \label{200-10}
	\chi(mT)=e^{\frac{i2 \pi}{n}\log_T(mT)}=e^{i2 \pi m/n}, 
	\end{equation}
	where $\log_T(mT)=m$ with $m \in \mathbb{Z}$.
\end{dfn}

\subsection{Divisors Dependent Characteristic Function}
A characteristic function for primitive points on elliptic curve is described in the \cite[p.\ 5]{SV11}; it was used there to derive a primitive point search algorithm.

\begin{lem} \label{lem3.6}
	Let $E$ be a nonsingular elliptic curve, and let $E(\mathbb{F}_p)$ be its group of points of cardinality $n=\#E(\mathbb{F}_p)$. Let $\chi$ be the additive character of order $d$ modulo $d$, and assume $P=(x_0,y_0)$ is a point on the curve. Then, 
	\begin{equation}\label{el33007}
	\Psi_E (P)=\sum _{d \,|\, n} \frac{\mu (d)}{d}\sum _{0 \leq t <d} \chi(tP)=
	\left \{\begin{array}{ll}
	1 & \text{ if } \ord_E (P)=n,  \\
	0 & \text{ if } \ord_E (P)\neq n. \\
	\end{array} \right .
	\end{equation}
\end{lem}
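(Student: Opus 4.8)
The plan is to transport the entire computation into the cyclic group $\mathbb{Z}/n\mathbb{Z}$ via the discrete logarithm $\log_T$ attached to the primitive point $T$, and then to recognize $\Psi_E(P)$ as the Möbius detector of the coprimality condition $\gcd(\log_T P,\,n)=1$. Since a primitive point $T$ is available by Lemma \ref{lem3.3b}, every $P\in E(\mathbb{F}_p)$ can be written as $P=mT$ with $m=\log_T P \in \mathbb{Z}/n\mathbb{Z}$, and the order satisfies $\ord_E(P)=n/\gcd(m,n)$; in particular $P$ is primitive precisely when $\gcd(m,n)=1$. So the target dichotomy is equivalent to showing $\Psi_E(P)=1$ iff $\gcd(m,n)=1$.

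First I would evaluate the inner sum for a fixed divisor $d\mid n$. By the definition of the additive character modulo $d$ one has $\chi(tP)=e^{i2\pi tm/d}$, so
\begin{equation}
\sum_{0\le t<d}\chi(tP)=\sum_{t=0}^{d-1}e^{i2\pi tm/d},
\end{equation}
a complete geometric sum of $d$-th roots of unity. By orthogonality this equals $d$ when $d\mid m$ and $0$ otherwise, since $e^{i2\pi m/d}=1$ exactly when $d\mid m$. Hence $\tfrac{1}{d}\sum_{0\le t<d}\chi(tP)$ is the indicator of the event $d\mid m$.

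Substituting this into the outer sum collapses the factor $1/d$ against the value $d$ of the inner sum and restricts the range of summation:
\begin{equation}
\Psi_E(P)=\sum_{d\mid n}\frac{\mu(d)}{d}\sum_{0\le t<d}\chi(tP)=\sum_{\substack{d\mid n\\ d\mid m}}\mu(d)=\sum_{d\mid \gcd(m,n)}\mu(d).
\end{equation}
The classical Möbius identity $\sum_{d\mid k}\mu(d)=1$ for $k=1$ and $0$ for $k>1$ then gives $\Psi_E(P)=1$ exactly when $\gcd(m,n)=1$ and $\Psi_E(P)=0$ otherwise. Combining this with $\ord_E(P)=n/\gcd(m,n)$ yields the stated two cases.

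The step I expect to be the only real subtlety is the reading of the $d$-dependent character $\chi$ in the inner sum: one must confirm that, as $t$ ranges over $0\le t<d$, the values $\chi(tP)$ run through a full set of $d$-th roots of unity, so that the orthogonality evaluation is exactly $d\cdot[d\mid m]$ rather than a partial sum. Everything else is standard Möbius-inversion bookkeeping, and the cyclicity needed to define $\log_T$ is precisely the hypothesis under which primitive points exist, so no separate non-cyclic case has to be treated.
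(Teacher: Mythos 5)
Your argument is correct, and in fact the paper offers no proof of Lemma \ref{lem3.6} to compare it against: the statement is simply imported from Shparlinski--Voloch \cite[p.~5]{SV11}, so your write-up supplies the missing details. The chain discrete logarithm $\to$ geometric-sum orthogonality $\to$ M\"obius identity $\sum_{d\mid k}\mu(d)=[k=1]$ is the standard and intended route, and each step checks out: with $P=mT$ one has $\chi_d(tP)=e^{i2\pi tm/d}$ (well defined because $d\mid n$), the inner sum is $d\cdot[d\mid m]$, and the outer sum collapses to $[\gcd(m,n)=1]=[\ord_E(P)=n]$. Two small points deserve care. First, the lemma as stated does not hypothesize that $E(\mathbb{F}_p)$ is cyclic, and your construction genuinely needs cyclicity to define $\log_T$ and the characters $\chi_d$; Lemma \ref{lem3.3b} only gives a primitive point \emph{under} the cyclicity hypothesis, it does not establish it. In the non-cyclic case no point has order $n$ and a character of order $n$ on $E(\mathbb{F}_p)$ does not exist, so the formula is not even well posed there; you should either add cyclicity as a hypothesis or note that the lemma is only invoked when $n$ is prime, where cyclicity is automatic. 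Second, your closing remark that the values $\chi(tP)$ ``run through a full set of $d$-th roots of unity'' is literally true only when $\gcd(m,d)=1$; what you actually need, and what your displayed computation correctly uses, is just the geometric-sum evaluation $\sum_{0\le t<d}e^{i2\pi tm/d}=d\cdot[d\mid m]$, so the body of the proof is unaffected.
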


\subsection{Divisors Free Characteristic Function}
A new \textit{divisors-free} representation of the characteristic function of elliptic primitive points is developed here. This representation can overcomes some of the limitations of its equivalent in \ref{lem3.6} in certain applications. The \textit{divisors representation} of the characteristic function of elliptic primitive points, Lemma \ref{lem3.6}, detects the order \(\text{ord}_E (P)\) of the point \(P\in E( \mathbb{F}_p) \) by means of the divisors of the order \(n=\#E( \mathbb{F}_p) \). In contrast, the \textit{divisors-free representation} of the characteristic function, Lemma \ref{lem3.7}, detects the order \(\text{ord}_E(P) \geq 1\) of a point \(P\in E(\mathbb{F}_p)\) by means of the solutions of the equation $mT-P=\mathcal{O}$, where \(P,T \in E(\mathbb{F}_p)\) are fixed points, \(\mathcal{O}\) is the identity point, and $m $ is a variable such that $0\leq m \leq n-1$, and $\gcd (m,n)=1$. \\

\begin{lem} \label{lem3.7}
	Let \(p\geq 2\) be a prime, and let \(T\) be a primitive point in \(E(\mathbb{F}_p)\). For a nonzero point \(P \in
	E(\mathbb{F}_p)\) of order $n$ the following hold:
	If \(\chi \neq 1\) is a nonprincipal additive elliptic character of order \(\ord \chi =n\), then
	\begin{equation}
	\Psi_E (P)=\sum _{\gcd (m,n)=1} \frac{1}{n}\sum _{0\leq r\leq n-1} \chi \left ((mT-P)r\right)
	=\left \{
	\begin{array}{ll}
	1 & \text{ if } \ord_E(P)=n,  \\
	0 & \text{ if } \ord_E(P)\neq n, \\
	\end{array} \right .
	\end{equation}
	where \(n=\#
	E(\mathbb{F}_p)\) is the order of the rational group of points.
\end{lem}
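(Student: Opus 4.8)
The plan is to reduce everything to the discrete logarithm relative to the primitive point $T$ and then evaluate the two nested sums by orthogonality of additive characters. Since $T$ is primitive we have $\ord_E(T) = n = \#E(\mathbb{F}_p)$, so $E(\mathbb{F}_p)$ is cyclic and generated by $T$; every point is therefore an integer multiple of $T$. Accordingly I would set $k = \log_T(P)$, so that $P = kT$, and recall from \eqref{200-10} that the order-$n$ character satisfies $\chi(mT) = e^{i2\pi m/n}$ for every integer $m$.

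First I would simplify the argument of the character. By additivity of the group law and $P = kT$, one has $mT - P = (m-k)T$, hence $(mT-P)r = (m-k)rT$ and $\chi\bigl((mT-P)r\bigr) = e^{i2\pi (m-k)r/n}$. This collapses the inner sum over $r$ into the finite geometric series $\sum_{0\le r\le n-1} e^{i2\pi (m-k)r/n}$, which is exactly the kind of sum governed by character orthogonality.

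Next I would evaluate that inner sum: the geometric series equals $n$ when $n \mid (m-k)$ and vanishes otherwise. As $m$ ranges over $0 \le m \le n-1$, the only surviving index is the one with $m \equiv k \pmod n$. The prefactor $1/n$ then cancels the $n$, so $\Psi_E(P)$ reduces to the number of indices $m$ subject to $\gcd(m,n)=1$ that satisfy $m \equiv k \pmod n$; this count is $1$ if $\gcd(k,n)=1$ and $0$ otherwise.

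Finally I would convert the arithmetic condition into the geometric one. Because $\ord_E(kT) = n/\gcd(k,n)$, the equality $\gcd(k,n)=1$ holds if and only if $\ord_E(P)=n$, i.e. $P$ is primitive. This yields $\Psi_E(P)=1$ exactly when $P$ is a primitive point and $\Psi_E(P)=0$ otherwise, as claimed. There is no deep obstacle here; the computation is a routine double application of orthogonality. The one step that demands care is the bookkeeping of the outer sum: one must check that the coprimality restriction $\gcd(m,n)=1$ is precisely what encodes primitivity, so that the single surviving residue $m\equiv k$ is kept if and only if $k$ is a unit modulo $n$.
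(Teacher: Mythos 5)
Your proof is correct and follows essentially the same route as the paper's: substitute the discrete logarithm so that $\chi((mT-P)r)=e^{i2\pi(m-k)r/n}$, evaluate the inner sum as a geometric series detecting $m\equiv k\pmod n$, and observe that the coprimality restriction on $m$ keeps that residue exactly when $P$ is primitive. Your version is in fact spelled out more completely than the paper's, which only gestures at the geometric-series identity, but there is no difference in method.
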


\begin{proof} As the index \(m\geq 1\) ranges over the integers relatively prime to \(n\), the element \(mT\in E(\mathbb{F}_p)\) ranges over the elliptic primitive points. Ergo, the linear equation 
	\begin{equation}
	mT-P=\mathcal{O},
	\end{equation}
	where \(P,T \in E(\mathbb{F}_p)\) are fixed points, \(\mathcal{O}\) is the identity point, and $m $ is a variable such that $0\leq m \leq n-1$, and $\gcd (m,n)=1$, has a solution if and only if the fixed point \(P\in E(\mathbb{F}_p)\) is an elliptic primitive point. Next, replace \(\chi (t)=e^{i 2\pi  t/n }\) to obtain
	\begin{equation}
	\Psi_E(P)=\sum_{\gcd (m,n)=1} \frac{1}{n}\sum_{0\leq r\leq n-1} e^{i 2\pi  \log_T(mT-P)r/n }=
	\left \{\begin{array}{ll}
	1 & \text{ if } \ord_E (P)=n,  \\
	0 & \text{ if } \ord_E (P)\neq n, \\
	\end{array} \right.
	\end{equation}
	
	This follows from the geometric series identity $\sum_{0\leq k\leq N-1} w^{ k }=(w^N-1)/(w-1)$ with $w \ne 1$, applied to the inner sum.   
\end{proof}

\section{Primes In Short Intervals} \label{sec3}

There are many unconditional results for the existence of primes in short intervals $[x,x+y]$ of subsquareroot length $y\leq x^{1/2}$ for almost all large numbers $x\geq 1$. One of the earliest appears to be the Selberg result for $y=x^{19/77}$ with $x\leq X$ and 
$O\left (X (\log X)^{2} \right )$ exceptions, see \cite[Theorem 4]{SA43}. Recent improvements based on different analytic methods are given in \cite{WN95}, \cite{HG07}, and other authors. One of these results, but not the best, has the following claim. This calculation involves the weighted prime indicator function (vonMangoldt), which is defined by
\begin{equation}\label{800-12}
\Lambda(n)=
\begin{cases}
\log n &
 \text{ if } n=p^k, k \geq 1,\\
0 &\text{ if } n \ne p^k, k \geq 1,
\end{cases}
\end{equation}
where $p^k,k\geq 1$, is a prime power.\\  
 
\begin{thm} (\cite{HG07}) \label{thm4.1}
Given $\varepsilon>0$, almost all intervals of the form $[x-x^{1/6+\varepsilon},x]$ contains primes except for a set of $x \in [X,2X]$ with measure $O\left (X (\log X)^{-C+1} \right )$, and $C>1$ constant. Moreover, 
\begin{equation}\label{key}
\sum_{x-y 	\leq n \leq x} \Lambda(n)>\frac{y}{2}.
\end{equation}
\end{thm}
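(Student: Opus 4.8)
The plan is to deduce the existence of primes from a mean-square (second moment) estimate for the Chebyshev function in short intervals. Write $\psi(x)=\sum_{n\le x}\Lambda(n)$, so that with $y=x^{1/6+\varepsilon}$ one has $\sum_{x-y\le n\le x}\Lambda(n)=\psi(x)-\psi(x-y)+O(\log x)$. It therefore suffices to show that, for all $x\in[X,2X]$ outside a thin exceptional set, the deviation $D(x)=\psi(x)-\psi(x-y)-y$ satisfies $|D(x)|<y/2$; this forces $\psi(x)-\psi(x-y)>y/2$, hence produces a prime power, and in fact a prime, in $[x-y,x]$, and simultaneously establishes the displayed inequality \eqref{key}.

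First I would invoke the truncated explicit formula: for a parameter $2\le T\le x$,
\begin{equation}
D(x)=-\sum_{|\gamma|\le T}\frac{x^{\rho}-(x-y)^{\rho}}{\rho}+O\left(\frac{x\log^2x}{T}\right),
\end{equation}
where $\rho=\beta+i\gamma$ runs over the nontrivial zeros of $\zeta(s)$. Taking $T$ slightly above $x^{5/6}$ makes the error term negligible against $y$. The core of the argument is then the second moment
\begin{equation}
I=\int_{X}^{2X}\left|\sum_{|\gamma|\le T}\frac{x^{\rho}-(x-y)^{\rho}}{\rho}\right|^2dx.
\end{equation}
Expanding the square and integrating term by term, the bound $|x^{\rho}-(x-y)^{\rho}|\ll |\rho|\,y\,x^{\beta-1}$ shows that the diagonal $\gamma=\gamma'$ contributes $\ll y^2\sum_{|\gamma|\le T}X^{2\beta-1}$, while the off-diagonal terms $\gamma\ne\gamma'$ acquire an extra oscillatory saving of size $1/|\gamma-\gamma'|$ and are of lower order.

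The decisive input is a zero-density estimate. Applying Huxley's bound $N(\sigma,T)\ll T^{(12/5)(1-\sigma)+\varepsilon}$ and summing over zeros by partial summation, one controls $\sum_{|\gamma|\le T}X^{2\beta-1}$; after balancing the truncation parameter $T$ one verifies that $I=o(Xy^{2})$ precisely when $y=x^{1/6+\varepsilon}$. Here the exponent $1/6=1-2\cdot\tfrac{5}{12}$ reflects the mean-square ``squaring'' of Huxley's $7/12=1-\tfrac{5}{12}$ exponent for the all-intervals problem. Markov's inequality then gives
\begin{equation}
\operatorname{meas}\left\{x\in[X,2X]:|D(x)|\ge \tfrac{y}{2}\right\}\ll \frac{I}{y^2}=o(X),
\end{equation}
which is the claimed thin exceptional set (sharpened by a power of $\log X$ through the density machinery), and on its complement \eqref{key} holds.

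I expect the main obstacle to be the uniform control of the off-diagonal terms together with the extraction of the sharp exponent $1/6$ from the density estimate: the delicate point is that the saving $1/|\gamma-\gamma'|$ must be summed against the vertical density of zeros without losing more than a power of $\log X$, and the single choice of $T$ must simultaneously annihilate the explicit-formula error and keep the diagonal $o(X)$. This balancing, and the large-values/zero-density input behind it, is exactly where the analytic machinery of \cite{HG07} is indispensable.
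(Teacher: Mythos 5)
Your proposal is correct in outline and follows essentially the same route as the paper, whose entire proof of Theorem~\ref{thm4.1} is the one-line citation ``based on zero density theorems, ibidem, p.~190'': the truncated explicit formula, the second moment of $\psi(x)-\psi(x-y)-y$ over $x\in[X,2X]$ with diagonal/off-diagonal splitting, Huxley's bound $N(\sigma,T)\ll T^{(12/5)(1-\sigma)+\varepsilon}$, and Chebyshev's inequality are precisely the standard machinery behind that citation, and you supply considerably more detail than the paper does. The one caveat is that at the exact borderline $T\asymp x^{5/6}$ the density estimate alone leaves the diagonal at $O(X\log^{C}X)$ rather than $o(X)$, so the contribution of zeros with $\beta$ near $1$ must be killed by the (Vinogradov--Korobov) zero-free region after slightly lowering $T$ --- a detail you correctly flag as the delicate balancing deferred to \cite{HG07}.
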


\begin{proof}The proof is based on zero density theorems, see \cite[p.\ 190]{HG07}.
\end{proof}
An introduction to zero density theorems and its application to primes in short intervals are given in \cite[p. 98]{KA93}, \cite[p.\ 264]{IK04}, et cetera. \\

The same result works for larger intervals $[x-x^{\theta +\varepsilon},x]$, where $\theta \in (1/6,1/2)$ and $(1-\theta)C<2$, with exceptions $O\left (X (\log X)^{-C+1} \right ), C>1$.

\section{Evaluation Of The Main Term}
A lower bound for the main term $M(x)$ in the proof of Theorem \ref{thm800.1} is evaluated here. 

\begin{lem} \label{lem800.1}
	Let \(x\geq 1\) be a large number, and let $p \in [x,2x]$ be prime. Then, 
	\begin{equation} \label{800-100}
	\sum_{x \leq p \leq 2x} \frac{1}{4 \sqrt{p} }  \sum_{p-2 \sqrt{p}\leq n\leq p+2\sqrt{p} } \frac{\Lambda(n)}{\log   n}\cdot\frac{1}{n}\sum_{\gcd(m,n)=1}1\gg\frac{1}{2} \frac{x}{\log^2 x} \left (1+  O  \left ( \frac{x}{\log x}  \right ) \right ).
	\end{equation} 
\end{lem}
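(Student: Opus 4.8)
The plan is to collapse the innermost sum, reduce the middle sum to a sum over primes via the von Mangoldt weight, apply the short-interval prime estimate of Theorem \ref{thm4.1} to bound that middle sum from below, and finally sum over $p$ using the prime number theorem on $[x,2x]$.

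First I would evaluate the innermost sum. Since $m$ runs over the residues $0\le m\le n-1$ with $\gcd(m,n)=1$, there are exactly $\varphi(n)$ of them, each contributing $1/n$, so $\sum_{\gcd(m,n)=1}\frac1n=\frac{\varphi(n)}{n}$. The middle sum then becomes $S(p):=\sum_{p-2\sqrt p\le n\le p+2\sqrt p}\frac{\Lambda(n)}{\log n}\cdot\frac{\varphi(n)}{n}$. Because $\Lambda$ is supported on prime powers $n=q^k$, and the interval of length $4\sqrt p$ contains only $O(1)$ proper prime powers (those with $k\ge 2$), the genuine contribution comes from the primes $n=q$, for which $\Lambda(q)/\log q=1$ and $\varphi(q)/q=1-1/q$. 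Since $\varphi(n)/n\ge \tfrac12$ for every prime power $n$, I would use the crude but sufficient termwise bound $S(p)\ge \tfrac12\sum_{p-2\sqrt p\le n\le p+2\sqrt p}\frac{\Lambda(n)}{\log n}$.

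Next I would invoke the short-interval estimate. The Hasse interval $[p-2\sqrt p,\,p+2\sqrt p]$ has length $4\sqrt p\asymp x^{1/2}$, which falls in the admissible range of the remark following Theorem \ref{thm4.1}: take $\theta$ slightly below $1/2$ and $\varepsilon$ with $\theta+\varepsilon>1/2$, so that $4\sqrt p\le (p+2\sqrt p)^{\theta+\varepsilon}$ for large $p$, with $(1-\theta)C<2$. Hence, for all $p$ outside the exceptional set,
\[
\sum_{p-2\sqrt p\le n\le p+2\sqrt p}\Lambda(n)>\frac{4\sqrt p}{2}=2\sqrt p,
\]
and since $\log n=\log p\,(1+o(1))$ throughout the interval, $S(p)\ge \tfrac12\cdot\frac{2\sqrt p}{\log(p+2\sqrt p)}\gg \frac{\sqrt p}{\log p}$. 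Assembling the outer sum, all summands are non-negative, so discarding the exceptional primes only decreases the total; the good primes in $[x,2x]$ still number $\gg x/\log x$ by the prime number theorem. Therefore
\[
\sum_{x\le p\le 2x}\frac{1}{4\sqrt p}\,S(p)\gg\sum_{\substack{x\le p\le 2x\\ p\ \text{good}}}\frac{1}{4\sqrt p}\cdot\frac{\sqrt p}{\log p}=\sum_{p\ \text{good}}\frac{1}{4\log p}\gg\frac{1}{\log x}\cdot\frac{x}{\log x}\gg\frac{x}{\log^2 x},
\]
which is the claimed order of magnitude.

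The main obstacle I anticipate is the bookkeeping around the exceptional set of Theorem \ref{thm4.1}: one must convert its ``almost all intervals'' (measure) formulation into a statement valid for almost all \emph{primes} $p\in[x,2x]$, and verify that the excluded primes, being negligible in count, cannot spoil the lower bound. The non-negativity of every term makes this step safe, since the exceptional $p$ may simply be dropped. Recovering the sharper density constant $\delta(d_E,E)$ — rather than the crude factor $\tfrac12$ coming from $\varphi(n)/n\ge\tfrac12$ — would instead require summing over primes $n=q$ directly and exploiting $\varphi(q)/q=1-1/q\to 1$, together with the precise count of primes in the Hasse interval.
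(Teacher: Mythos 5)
Your proposal follows essentially the same route as the paper's own proof: collapse the inner sum to $\varphi(n)/n$, bound it below by $1/2$ on prime powers, apply the short-interval estimate of Theorem \ref{thm4.1} to the Hasse interval, and finish with the prime number theorem on $[x,2x]$. Your handling of the exceptional set (dropping exceptional primes by non-negativity) and of the borderline interval length $4\sqrt{p}\asymp x^{1/2}$ is, if anything, slightly more explicit than the paper's.
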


\begin{proof}  The main term can be rewritten in the form
	\begin{eqnarray} \label{800-110}
	M(x)&=&\sum_{x \leq p \leq 2x} \frac{1}{4 \sqrt{p} }  \sum_{p-2 \sqrt{p}\leq n\leq p+2\sqrt{p}} \frac{\Lambda(n)}{\log n} \cdot\frac{1}{n}\sum_{\gcd(m,n)=1}1 \nonumber\\
	&=&\sum_{x \leq p \leq 2x} \frac{1}{4 \sqrt{p} }  \sum_{p-2 \sqrt{p}\leq n\leq p+2\sqrt{p}} \frac{\Lambda(n)} {\log n}\cdot\frac{\varphi(n)}{n} .
	\end{eqnarray} 
The Euler phi function $\varphi(n)=\#\{1\leq m<n:\gcd(m,n)=1\} $, and it has the value $\varphi(n)/n=1-1/n$ at a prime argument $n$. Thus, the expression 
\begin{equation} \label{800-103}
	 \frac{\Lambda(n)}{\log   n}\frac{\varphi(n)}{n} = 
\begin{cases}
 \frac{\Lambda(n)}{\log   n}\left (1-\frac{1}{ n} \right ) & \mbox{if } n=p^m, m \geq 1,\\
0 &\mbox{if } n=p^m, m \geq 1,
\end{cases}
\end{equation} 
is supported on the set of prim powers $n=p^m, m \geq 1$. Consequently,	
\begin{eqnarray} \label{800-110}
	M(x)&=&\sum_{x \leq p \leq 2x} \frac{1}{4 \sqrt{p} }  \sum_{p-2 \sqrt{p}\leq n\leq p+2\sqrt{p}} \frac{\Lambda(n)}{\log n} \left (1-\frac{1}{  n} \right )\nonumber \\
	&\geq& \frac{1}{2} \sum_{x \leq p \leq 2x} \frac{1}{4 \sqrt{p} } \cdot \frac{1}{\log p} \sum_{p-2 \sqrt{p}\leq n\leq p+2\sqrt{p}} \Lambda(n),
	\end{eqnarray} 
since $1-1/n \geq 1/2$. Now observe that as the prime $p \in [x,2x]$ varies, the number of intervals $[p-2 \sqrt{p},p+2\sqrt{p}]$ is the same as the number of primes in the interval $[x,2x]$, namely, 
	\begin{equation}\label{800-140}
	\pi(2x)-\pi(x)=\frac{x}{\log x} \left (1+ O \left (\frac{1}{\log x}  \right )\right )>\frac{x}{2\log x} 	
	\end{equation}
	for large $x\geq1$. By Theorem \ref{thm4.1}, the finite sum over the short interval satisfies
	\begin{equation}\label{800-115}
	\sum_{p-2 \sqrt{p}\leq n\leq p+2\sqrt{p}} \Lambda(n)>\frac{4\sqrt{p}}{2},
	\end{equation} 
	with $O \left (x\log^{-C}x  \right )$ exceptions $p \in [x,2x]$, where $1<C<4$. Take $C>2$, then, the number of exceptions is small in comparison to the number of intervals $\pi(2x)-\pi(x) > x/2\log x$ for large $x \geq 1$. Hence, an application of this Theorem yields
	\begin{eqnarray} \label{800-120}
	M(x) &\geq& \frac{1}{2}\sum_{x \leq p \leq 2x} \frac{1}{4 \sqrt{p} } \cdot \frac{1}{\log p} \sum_{p-2 \sqrt{p}\leq n\leq p+2\sqrt{p}} \Lambda(n) \nonumber \\
	&\geq&\frac{1}{2}\sum_{x \leq p \leq 2x} \frac{1}{4 \sqrt{p} } \cdot \frac{1}{\log p} \cdot \left( \frac{4\sqrt{p}}{2} \right ) +O \left (\frac{x}{\log^C x}  \right )\nonumber \\
	&\geq&\frac{1}{4}\frac{1}{\log x}\sum_{x \leq p \leq 2x} 1 +O \left (\frac{x}{\log^C x}  \right ) \\
	&\geq&\frac{1}{4}\frac{1}{\log x}\cdot  \frac{x}{\log x} \left (1+ O \left (\frac{1}{\log x}  \right )\right )  \nonumber, 
\end{eqnarray} 
where all the errors terms are absorbed into one term. 
\end{proof}

\textbf{Remark 4.1.} The exceptional intervals $[p-2 \sqrt{p},p+2\sqrt{p}]$, with $p \in [x,2x]$, contain fewer primes, that is,
\begin{equation}\label{800-210}
\sum_{p-2 \sqrt{p}\leq n\leq p+2\sqrt{p}} \Lambda(n)=o(\sqrt{p}).
\end{equation}
This shortfalls is accounted for in the correction term 
\begin{eqnarray}\label{800-213}
C(x)&=&\sum_{x \leq p \leq 2x} \frac{1}{4 \sqrt{p} } \cdot \frac{1}{\log p} \left (1-\frac{1}{  p} \right )\sum_{p-2 \sqrt{p}\leq n\leq p+2\sqrt{p}} \Lambda(n) \nonumber \\
&=&o \left(\sum_{x \leq p \leq 2x} \frac{1}{4 \sqrt{p} } \cdot \frac{1}{\log^2 p} \cdot \sqrt{p} \right ) \nonumber \\ &=&O \left (\frac{x}{\log^C x}  \right ),
\end{eqnarray}
where $C>2$, in the previous calculation.

\section{Estimate For The Error Term}
The analysis of an upper bound for the error term $E(x)$, which occurs in the proof of Theorem \ref{thm800.1}, is split into two parts. The first part in Lemma \ref{lem800.2} is an estimate for the triple inner sum. And the final upper bound is assembled in Lemma \ref{lem800.3}. \\

\begin{lem} \label{lem800.2}
	Let $E$ be a nonsingular elliptic curve over rational number, let $P \in E(\mathbb{Q})$ be a point of infinite order. Let \(x\geq 1\) be a large number. For each prime $p\geq 3$, fix a primitive point $T$, and suppose that $P \in E(\mathbb{F}_p)$ is not a primitive point for all primes $p\geq 2$, then
	
	\begin{equation} \label{800-220}	 
	\sum_{p-2 \sqrt{p}\leq n\leq p+2\sqrt{p} }\frac{\Lambda(n)}{\log n} \cdot \frac{1}{n }\sum_{\gcd(m,n)=1,}  
	\sum_{ 1 \leq r <n} \chi((mT-P)r) \leq 2 .	\end{equation} 
\end{lem}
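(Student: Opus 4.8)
The plan is to recognize the inner double sum as precisely the non-principal ($r\geq 1$) part of the characteristic function $\Psi_E(P)$ from Lemma \ref{lem3.7}, and then to exploit the standing hypothesis that $P$ fails to be primitive. First I would isolate the $r=0$ contribution in the defining identity of Lemma \ref{lem3.7}. Since $\chi\big((mT-P)\cdot 0\big)=\chi(\mathcal{O})=1$ for every admissible $m$, and there are exactly $\varphi(n)$ indices with $\gcd(m,n)=1$, that identity splits as
\[
\Psi_E(P)=\frac{\varphi(n)}{n}+\frac{1}{n}\sum_{\gcd(m,n)=1}\ \sum_{1\leq r<n}\chi\big((mT-P)r\big),
\]
which rearranges to the exact evaluation
\[
\sum_{\gcd(m,n)=1}\ \sum_{1\leq r<n}\chi\big((mT-P)r\big)=n\,\Psi_E(P)-\varphi(n).
\]
Thus the whole inner double sum is controlled by a single bit of information: whether or not $P$ is primitive.

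Next I would feed in the hypothesis. Writing $P=bT$ with $b=\log_T(P)$, the equation $mT=P$ forces $m\equiv b \pmod n$, so it is solvable with $\gcd(m,n)=1$ exactly when $\gcd(b,n)=1$, i.e.\ exactly when $\ord_E(P)=n$. Under the assumption that $P$ is \emph{not} a primitive point we therefore have $\Psi_E(P)=0$, and the inner double sum collapses to $-\varphi(n)$. Equivalently, one may verify this directly from the geometric series step used in Lemma \ref{lem3.7}: for fixed $m$ the inner sum over $r$ equals $n-1$ when $mT=P$ and $-1$ otherwise, and since no admissible $m$ achieves $mT=P$, summing over the $\varphi(n)$ coprime residues gives $-\varphi(n)$. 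Substituting back, the quantity to be bounded becomes
\[
\sum_{p-2\sqrt{p}\leq n\leq p+2\sqrt{p}}\frac{\Lambda(n)}{n\log n}\big(-\varphi(n)\big)=-\sum_{p-2\sqrt{p}\leq n\leq p+2\sqrt{p}}\frac{\Lambda(n)\,\varphi(n)}{n\log n}.
\]

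The one-sided nature of the bound is the key structural point, and the step I would be most careful to state honestly. Because each inner double sum equals $-\varphi(n)\leq 0$ while the weights $\Lambda(n)/(n\log n)$ are non-negative, every summand is non-positive, so the entire expression is $\leq 0\leq 2$ with room to spare; the constant $2$ is merely convenient slack and plays no essential role. I would stress that a two-sided (absolute value) bound is genuinely unavailable: on a prime power $n=q^k$ the companion weight is $\tfrac{\Lambda(n)\varphi(n)}{n\log n}=\tfrac{1-1/q}{k}$, which is $\approx 1$ for each prime $n=q$ in the Hasse interval, so $\sum_n \Lambda(n)\varphi(n)/(n\log n)$ has magnitude of order $\sqrt{p}/\log p$. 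Hence no nontrivial cancellation is being asserted; the content of the lemma is purely that the sign is favorable. The only real work, then, is the exact evaluation of the inner sum via Lemma \ref{lem3.7} and the correct use of the primitivity hypothesis to annihilate the diagonal contribution $mT=P$, which is where I would concentrate the rigor.
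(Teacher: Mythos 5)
Your proof is correct for the inequality as stated, but it takes a genuinely different route from the paper, and the difference is substantive: the two arguments assign \emph{different values} to the inner double sum. The paper factors the double sum as $\bigl(\sum_{1\leq r<n}e^{-2\pi irk/n}\bigr)\bigl(\sum_{\gcd(m,n)=1}e^{2\pi irm/n}\bigr)$, evaluates each factor as $-1$ (the first as a geometric series ``for $1\leq k<n$'', the second as a Ramanujan sum for prime $n$), concludes the double sum equals $+1$, and then needs the Mertens-type estimate $\sum_{n\leq x}\Lambda(n)/n\ll\log x$ to reach the constant $2$. Your route instead evaluates the double sum exactly as $n\Psi_E(P)-\varphi(n)=-\varphi(n)$ under the non-primitivity hypothesis, so every term is non-positive and the bound $\leq 2$ is trivial slack. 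Note that your evaluation is the one consistent with the lemma's hypothesis: the paper's $U_n=-1$ requires $k=\log_T P\not\equiv 0\pmod n$, and for prime $n$ that forces $\gcd(k,n)=1$, i.e.\ $P$ \emph{is} primitive --- contradicting the assumption the lemma is supposed to be operating under. What the paper's (internally inconsistent) evaluation ``buys'' is a bounded absolute value, which is exactly what Lemma \ref{lem800.3} later consumes when it writes $|E(x)|\leq\sum_p\frac{1}{4\sqrt p}\cdot 2=O(x^{1/2})$. Your honest accounting shows that this is not available: the correct value $-\sum_n\Lambda(n)\varphi(n)/(n\log n)$ has magnitude of order $\sqrt p/\log p$ per prime $p$, so the one-sided bound you prove cannot be upgraded to the two-sided bound that the error-term analysis in Lemma \ref{lem800.3} actually requires. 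In short, your proof is the correct one of the stated inequality, and in proving it you have correctly identified that the statement, as used downstream, does not do the work the paper needs it to do.
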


\begin{proof} Let $\log_T:E(\mathbb{F}_p) \longrightarrow \mathbb{Z}_n$ be the discrete logarithm function with respect to the fixed primitive point $T$, defined by $\log_T(mT)=m$, $\log_T(P)=k$, and $\log_T(\mathcal{O})=0$. Then, the nontrivial additive character evaluates to
	\begin{equation}
	\chi(rmT)=e^{\frac{i2 \pi}{n}\log_T(rmT)}=e^{i2 \pi rm/n},
	\end{equation} and 
\begin{equation}
\chi(-rP)=e^{\frac{i2 \pi}{n}\log_T(-rP)}=e^{-i2 \pi rk/n},
\end{equation}
	respectively. To derive a sharp upper bound, rearrange the inner double sum as a product 
	\begin{eqnarray} \label{800-230}
	&& T(p) \nonumber \\&=&\sum_{p-2 \sqrt{p}\leq n\leq p+2\sqrt{p}}\frac{\Lambda(n)}{ \log n} \cdot \frac{1}{n }\sum_{\gcd(m,n)=1,}  
	\sum_{ 1 \leq r <n} \chi((mT-P)r) \nonumber \\
	&= &\sum_{p-2 \sqrt{p}\leq n\leq p+2\sqrt{p}} \frac{\Lambda(n)}{ \log n} \cdot \frac{1}{n }\sum_{ 1 \leq r <n} \chi(-rP)\sum_{\gcd(m,n)=1} \chi(rmT)\\
	&= &\sum_{p-2 \sqrt{p}\leq n\leq p+2\sqrt{p}}\frac{\Lambda(n)}{ \log n} \cdot \frac{1}{n }\left (\sum_{ 1 \leq r <n} e^{-i2 \pi rk/n} \right ) \left (\sum_{\gcd(m,n)=1} e^{i2 \pi rm/n} \right ) \nonumber.
	\end{eqnarray}
The hypothesis $mT-P\ne \mathcal{O}$ for $m\geq 1$ such that $\gcd(m,n)=1$ implies that the two inner sums in (\ref{800-230}) are complete geometric series, except for the terms for $m=0$ and $r=0$ respectively. Moreover, since $n\geq 2$ is a prime, the two geometric sums have the exact evaluations
	\begin{equation} \label{800-240}
	\sum_{ 0<r\leq n-1} e^{-i2 \pi rk/n}=-1    \qquad \text{ and } \qquad \sum_{\gcd(m,n)=1} e^{i2 \pi rm/n}=-1
	\end{equation} 
	for any $1 \leq k <n$, and $1\leq r < n$ respectively. Therefore, it reduces to
\begin{eqnarray} \label{800-233}
T(p)&=&\sum_{p-2 \sqrt{p}\leq n\leq p+2\sqrt{p}}\frac{\Lambda(n)}{ \log n}  \cdot \frac{1}{n }(-1)(-1)\nonumber \\
&\leq & \frac{1}{\log p}\sum_{ p-2 \sqrt{p}\leq n\leq p+2\sqrt{p}}\frac{\Lambda(n)}{n } \\
&\leq & \frac{1}{ \log p}\sum_{ n\leq p+2\sqrt{p}}\frac{\Lambda(n)}{n} \nonumber\\
&\leq  & 2 \nonumber,
\end{eqnarray}
refer to \cite[Theorem 2.7]{MV07} for additional details. 
\end{proof}

\begin{lem} \label{lem800.3}
	Let $E$ be a nonsingular elliptic curve over rational number, let $P \in E(\mathbb{Q})$ be a point of infinite order. For each large prime $p\geq 3$, fix a primitive point $T$, and suppose that $P \in E(\mathbb{F}_p)$ is not a primitive point for all primes $p\geq 2$, then
	
	\begin{equation} \label{800-400}
	\sum_{x\leq p\leq 2x,}\sum_{p-2 \sqrt{p}\leq n\leq p+2\sqrt{p} }   \frac{\Lambda(n)}{ \log n} \cdot \frac{1}{n }\sum_{\gcd(m,n)=1,} 
	\sum_{ 1 \leq r <n} \chi((mT-P)r) =O( x^{1/2} ).
	\end{equation} 
\end{lem}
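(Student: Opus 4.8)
The plan is to reduce the claimed bound in Lemma \ref{lem800.3} to the per-prime estimate already established in Lemma \ref{lem800.2} by a straightforward summation over $p \in [x,2x]$. Concretely, for each prime $p$ the triple inner sum is exactly the quantity $T(p)$ bounded in (\ref{800-230})--(\ref{800-233}), so the left-hand side of (\ref{800-400}) is $\sum_{x \leq p \leq 2x} T(p)$. Applying the bound $T(p) \leq 2$ from Lemma \ref{lem800.2} termwise gives
\begin{equation}
\sum_{x\leq p\leq 2x} T(p) \leq \sum_{x\leq p\leq 2x} 2 = 2\bigl(\pi(2x)-\pi(x)\bigr).
\end{equation}
First I would make this identification explicit, pointing out that the summand in (\ref{800-400}) is literally $T(p)$ from the previous lemma, so no new character-sum analysis is needed.

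Second, I would invoke the prime-counting estimate already used in (\ref{800-140}), namely $\pi(2x)-\pi(x) \ll x/\log x$, to conclude $\sum_{x\leq p\leq 2x} T(p) = O(x/\log x)$. Since $x/\log x = o(x^{1/2})$ is \emph{false} for large $x$ (indeed $x/\log x \gg x^{1/2}$), this naive count overshoots the stated $O(x^{1/2})$; the bound $T(p)\leq 2$ for \emph{every} prime $p$ cannot by itself yield $O(x^{1/2})$. So the real content must be that $T(p)$ is nonzero only rarely. Here I would use the hypothesis that $P$ is never a primitive point: when $mT - P \ne \mathcal{O}$ for all $m$ coprime to $n$, the two inner geometric sums each evaluate to $-1$ as in (\ref{800-240}), contributing $+1$, but when $P$ happens to equal some $mT$ the degenerate $m$-term breaks the clean evaluation. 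The telescoping that produces $O(x^{1/2})$ should come from the fact that the short intervals $[p-2\sqrt p, p+2\sqrt p]$ contain very few prime powers $n$, so that $\Lambda(n) \ne 0$ only for $O(1)$ values of $n$ per interval, and $\Lambda(n)/n \ll \log p / p \ll (\log x)/x$, whence each $T(p) \ll (\log x)/x \cdot (\text{number of prime powers in the interval})$.

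Third, I would therefore sharpen the per-prime bound: rather than using $T(p)\leq 2$ crudely, I would track that $T(p) = \sum_{n} \Lambda(n)/(n\log n)$ over prime powers $n$ in the short interval, which is of size at most $O(1/p \cdot \#\{\text{prime powers in } [p-2\sqrt p, p+2\sqrt p]\})$. Summing a bound of order $1/\sqrt{x}$ per prime over the $\ll x/\log x$ primes, or equivalently summing $\Lambda(n)/n$ over $n \leq 2x+O(\sqrt x)$ and weighting by the $1/\sqrt p$-width of each interval, should collapse to $O(x^{1/2})$. The key combinatorial input is that each prime power $n$ lands in only $O(\sqrt{n})$ of the windows, so a double-counting interchange of the $p$-sum and $n$-sum gives $\sum_n \frac{\Lambda(n)}{n\log n}\cdot O(\sqrt n) = O\bigl(\sum_{n\leq 2x}\Lambda(n)/\sqrt n\bigr) = O(\sqrt x)$ by partial summation against the prime number theorem.

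The main obstacle I expect is precisely reconciling the stated $O(x^{1/2})$ with the weak termwise bound $T(p)\leq 2$: the arithmetic only works if one exploits sparsity, either through the primitivity hypothesis forcing cancellation or through the scarcity of prime powers in subsquareroot windows together with the missing $1/(4\sqrt p)$ normalization factor (which appears in the main term of Lemma \ref{lem800.1} but is conspicuously \emph{absent} from the statement of Lemma \ref{lem800.3}). I would flag that the cleanest route is the interchange-of-summation argument, bounding $\sum_{x\leq p\leq 2x} T(p)$ by $\sum_{n\leq 2x+O(\sqrt x)} \frac{\Lambda(n)}{n\log n}\,\#\{p : |p-n| \leq 2\sqrt p\}$ and estimating the inner count as $O(\sqrt n)$, so that the final sum is $O\bigl(\sqrt{x}\bigr)$ after summation by parts; the delicate point is verifying that the character-sum cancellation and the counting of admissible $p$ are compatible with the $\gcd(m,n)=1$ constraint under the non-primitivity assumption.
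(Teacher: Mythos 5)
Your diagnosis of the mismatch is exactly right, and it is worth stating plainly how the paper itself resolves it: the paper's proof of Lemma \ref{lem800.3} does not bound the quantity in (\ref{800-400}) as literally written. It silently reinstates the weight $\frac{1}{4\sqrt{p}}$ (writing the left-hand side as the error term $E(x)$ from (\ref{800-520}), which carries that factor), applies the termwise bound $T(p)\leq 2$ from Lemma \ref{lem800.2}, and concludes via
\begin{equation}
\sum_{x\leq p\leq 2x}\frac{2}{4\sqrt{p}}\leq \frac{1}{\sqrt{x}}\sum_{x\leq p\leq 2x}1=O\!\left(x^{1/2}\right).
\end{equation}
So the paper's route is precisely the ``naive'' termwise summation you correctly reject for the unweighted sum, rescued only by the $1/(4\sqrt{p})$ normalization that is present in the application (and in the displayed proof) but absent from the lemma's statement. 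Your alternative --- interchanging the $p$- and $n$-sums, noting that each prime power $n$ satisfies $|p-n|\leq 2\sqrt{p}$ for at most $O(\sqrt{n})$ primes $p$, and then estimating $\sum_{n\leq 3x}\frac{\Lambda(n)}{n\log n}\cdot O(\sqrt{n})=O(\sqrt{x})$ by partial summation --- is a genuinely different final step that proves the statement as literally written, without the missing weight. What each approach buys: the paper's is shorter but only valid for the weighted quantity actually used in Theorem \ref{thm800.1}; yours repairs the statement itself and makes the $O(x^{1/2})$ savings transparent (it comes from the $1/n\asymp 1/x$ factor against the $O(\sqrt{x})$ window count, not from character cancellation across $p$).

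Two caveats. First, both arguments inherit the same load-bearing input from Lemma \ref{lem800.2}, namely that the double sum over $m$ and $r$ contributes only $O(1)$ per modulus $n$; if one instead evaluates $\sum_{1\leq r<n}\chi((mT-P)r)=-1$ for each fixed admissible $m$ and sums over the $\varphi(n)$ values of $m$, the per-$n$ contribution is $\frac{\Lambda(n)}{n\log n}\cdot\varphi(n)\asymp\frac{\Lambda(n)}{\log n}$, and your interchange then yields only $O(x^{3/2}/\log^2 x)$. You should make explicit which evaluation you are using and why the order of summation matters there. Second, the speculative material in your middle paragraph (the ``telescoping'' and the degenerate $m$-term) is not needed and should be cut; the entire content is the window-counting interchange in your final paragraph.
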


\begin{proof} By assumption $P \in E(\mathbb{F}_p)$ is not a primitive point. Hence, the linear equation $mT-P= \mathcal{O}$ has no solution $m \in \{m:\gcd(m,n)=1\}$. This implies that the discrete logarithm $\log_T(mT-P)\ne0$, and $ \sum_{ 1 \leq r <n} \chi((mT-P)r) =-1$. This in turns yields   
	\begin{eqnarray} \label{800-410}
	&&|E(x)| \nonumber \\ &=&\left |\sum_{x \leq p \leq 2x } \frac{1}{4 \sqrt{p} } \sum_{p-2 \sqrt{p}\leq n\leq p+2\sqrt{p}}\frac{\Lambda(n)}{\log n} \cdot \frac{1}{n }\sum_{\gcd(m,n)=1,}  
	\sum_{ 1 \leq r <n} \chi((mT-P)r) \right |\nonumber \\
	&\leq &\sum_{x \leq p \leq 2x } \frac{1}{4 \sqrt{p} } \sum_{p-2 \sqrt{p}\leq n\leq p+2\sqrt{p} }\frac{\Lambda(n)}{ \log n}\cdot \frac{1}{n } \sum_{\gcd(m,n)=1} 1 \\
	&\ll &\sum_{x \leq p \leq 2x } \frac{1}{4 \sqrt{p} } \sum_{p-2 \sqrt{p}\leq n\leq p+2\sqrt{p}}\frac{\Lambda(n)}{2 \log n} \nonumber \\
	&\ll& \frac{x}{\log^2 x}+O\left (\frac{x}{\log^3 x} \right ) \nonumber.
	\end{eqnarray}
Here, the inequality $(1/n)\sum_{\gcd(m,n)=1}1=\varphi(n)/n \leq 1/2$ for all integers $n\geq 1$ was used in the second line. Hence, there is a sharper nontrivial upper bound for the error term. To derive a sharper upper bound, take absolute value, and apply Lemma \ref{lem800.2} to the inner triple sum to obtain this:
\begin{eqnarray} \label{800-420}
&&|E(x)| \nonumber \\
&\leq&\sum_{x \leq p \leq 2x } \frac{1}{4 \sqrt{p} } \left |  \sum_{p-2 \sqrt{p}\leq n\leq p+2\sqrt{p} }\frac{\Lambda(n)}{\log n}\cdot \frac{1}{n } \sum_{\gcd(m,n)=1,}  
\sum_{ 1 \leq r <n} \chi((mT-P)r) \right |\nonumber \\
&\leq &\sum_{x \leq p \leq 2x } \frac{1}{4 \sqrt{p} } \left (2  \right ) \nonumber \\
&\leq &  \frac{1}{\sqrt{x} }\sum_{x \leq p \leq 2x }1 \\
&=&O\left (x^{1/2} \right ) \nonumber.
\end{eqnarray}
The last line uses the trivial estimate $\sum_{x \leq p \leq 2x }1\leq x$.
    \end{proof}

\section{Elliptic Divisors} \label{sec77}
The divisor $\text{div}(f)=\gcd(f(\mathbb{Z}))$ of a polynomial $f(x)$ is the greatest common divisor of all its values over the integers, confer \cite[p.\ 395]{FI10}. Basically, the same concept extents to the setting of elliptic groups of prime orders, but it is significantly more complex. 

\begin{dfn} Let $\mathcal{O}_{\mathcal{K}}$ be the ring of integers of a quadratic numbers field $\mathcal{K}$. The elliptic divisor is an integer $d_E \geq1$ defined by
\begin{equation}\label{800-590}
d_E=\gcd \left (\{ \#E(\mathbb{F}_p): p\geq 2 \text{ and }p \text{ splits in } \mathcal{O}_{\mathcal{K}} \}  \right ).
\end{equation}
\end{dfn}
Considerable works, \cite{CA05}, \cite{MG15}, \cite{IJ08}, \cite{JJ08}, have gone into determining the elliptic divisors for certain classes of elliptic curves.\\
 
\begin{thm} \label{thm800-20} {\normalfont (\cite[Proposition 1]{JJ08})}
The divisor of an elliptic curve $E:y^2=x^3+ax+b$ over the rational numbers $\mathbb{Q}$ with complex multiplication by $\mathbb{Q}(\sqrt{D})$ and conductor $N$ satisfies $d_E |24$. The complete list, with $c,m \in \mathbb{Z}-\{0\}$, is the following.\\

\begin{tabular}{|c|c|c|c|c|c|}
\hline 
\rule[-1ex]{0pt}{2.5ex} $D$ & $(a,b)$ & $d_E$ & $D$ & $(a,b)$ & $d_E$ \\ 
	\hline 
	\rule[-1ex]{0pt}{2.5ex} $-3$ & $(0,m)$ & $1$ & $-7$ & $(-140c^2,-784c^3)$ & $4$ \\ 
	\hline 
	\rule[-1ex]{0pt}{2.5ex} $-3$ & $(0,m^2);(0,-27m^2)$ & $3$ & $-8$ & $(-30c^2,-56c^3)$ & $2$ \\ 
	\hline 
	\rule[-1ex]{0pt}{2.5ex} $-3$ & $(0,m^3)$ & $4$ & $-11$ &$(-1056c^2,-13552c^3)$  & $1$ \\ 
	\hline 
	\rule[-1ex]{0pt}{2.5ex} $-3$ & $(0,c^6);(0,27c^6)$ & $12$ & $-19$ &$(-608c^2,-5776c^3)$  & $1$ \\ 
	\hline 
	\rule[-1ex]{0pt}{2.5ex} $-4$ & $(m,0)$ & $2$ & $-43$ &$(-13760c^2,-621264c^3)$  & $1$ \\ 
	\hline 
	\rule[-1ex]{0pt}{2.5ex} $-4$ & $(m^2,0);(-m^2,0)$ & $4$ & $-67$ &$(-117920c^2,-15585808c^3)$  & $1$ \\ 
	\hline 
	\rule[-1ex]{0pt}{2.5ex} $-4$ & $(-c^4,0);(4c^4,0)$ & $8$ & $-163$ &$(-34790720c^2,-78984748304c^3)$  & $1$ \\ 
	\hline 
	\end{tabular}   
\end{thm}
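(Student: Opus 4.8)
The plan is to reduce the determination of $d_E$ to the arithmetic of Frobenius elements in the CM field $\mathcal{K}=\Q(\sqrt{D})$. For a prime $p$ of good reduction that splits in $\mathcal{O}_{\mathcal{K}}$ as $p=\pi\bar{\pi}$, the reduction $E \bmod p$ is ordinary, and the main theorem of complex multiplication identifies the trace of Frobenius with $a_p=\pi+\bar{\pi}$ for a suitably normalized generator $\pi$. Consequently,
\begin{equation}
\#E(\F_p)=p+1-a_p=(\pi-1)(\bar{\pi}-1),
\end{equation}
so that the problem becomes the evaluation of $d_E=\gcd_{\pi}(\pi-1)(\bar{\pi}-1)$ as $\pi$ ranges over the Frobenius generators attached to split primes.

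First I would pin down the admissible set of generators $\pi$. By the theory of CM --- concretely, through the Hecke Gr\"ossencharacter attached to $E$ --- the generators that occur lie in a fixed congruence class modulo the conductor and modulo the unit group $\mathcal{O}_{\mathcal{K}}^{\times}$. The six families of Weierstrass models $(a,b)$ appearing in the table are exactly the quadratic, quartic, and sextic twists of the base curve; passing to a twist multiplies $\pi$ by a fixed root of unity and shifts its congruence class, and this is the precise mechanism by which $d_E$ depends on $(a,b)$ and not on $D$ alone.

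Next I would compute the gcd one congruence class at a time. Because the split primes are equidistributed over the admissible residues (Chebotarev, or Dirichlet on ideal classes), the generators $\pi$ realize every residue permitted by the conductor condition; hence $d_E$ equals the product over small primes $\ell$ of the largest power of $\ell$ that divides $(\pi-1)(\bar{\pi}-1)$ uniformly in $\pi$. For $\ell\geq 5$ one can always exhibit a split prime with $\pi\not\equiv 1$ modulo a prime above $\ell$, so no such $\ell$ divides $d_E$; the only contributions come from $\ell=2$ and $\ell=3$, where the extra units of $\mathcal{O}_{\mathcal{K}}$ (the sixth roots of unity for $D=-3$, the fourth roots for $D=-4$) shrink the admissible class and force the larger divisibilities. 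Assembling these local contributions gives simultaneously the uniform bound $d_E\mid 24$ and each individual entry of the table.

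The main obstacle will be the twist-by-twist bookkeeping in this last step: tracking $\ell$-adically at $\ell=2,3$ how the unit factor interacts with the conductor congruence for the quartic and sextic families. Because $\Q(\sqrt{-3})$ and $\Q(\sqrt{-1})$ carry extra units, the admissible class of $\pi$ is smaller and the guaranteed divisibility of $(\pi-1)(\bar{\pi}-1)$ is correspondingly larger; verifying that it is \emph{exactly} $3,4,12$ (respectively $2,4,8$), and not merely bounded by these, is the delicate part. This is where I would fix an explicit minimal model in each family and confirm the gcd is attained by testing a handful of small split primes $p$.
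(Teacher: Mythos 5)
First, a point of reference: the paper does not prove this statement at all --- Theorem \ref{thm800-20} is quoted verbatim from \cite[Proposition 1]{JJ08}, so there is no internal proof to compare against. Your overall strategy (write $\#E(\mathbb{F}_p)=(\pi-1)(\bar{\pi}-1)$ for split $p$ via the Gr\"ossencharacter, pin down the congruence class of the normalized generator $\pi$, and compute the gcd locally at small $\ell$) is the standard route and is essentially what the cited source does. But as written your argument has a genuine gap: you attribute every nontrivial entry of the table to ``the extra units of $\mathcal{O}_{\mathcal{K}}$ (the sixth roots of unity for $D=-3$, the fourth roots for $D=-4$).'' That mechanism cannot produce the rows $D=-7$ with $d_E=4$ and $D=-8$ with $d_E=2$, since $\mathbb{Q}(\sqrt{-7})$ and $\mathbb{Q}(\sqrt{-8})$ have unit group $\{\pm 1\}$; on your account those rows would read $d_E=1$, which is false.

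The divisibility in those rows comes from torsion, not units: $y^2=x^3-140x-784=(x-14)(x^2+14x+56)$ has the rational $2$-torsion point $(14,0)$, and the remaining $2$-torsion is defined over $\mathbb{Q}(\sqrt{-28})=\mathbb{Q}(\sqrt{-7})=\mathcal{K}$, so for every split prime $p$ of good reduction the full $2$-torsion injects into $E(\mathbb{F}_p)$ and $4\mid\#E(\mathbb{F}_p)$; similarly $y^2=x^3-30x-56$ has $(-4,0)$ rational but its other $2$-torsion lives in $\mathbb{Q}(\sqrt{2})\neq\mathcal{K}$, giving only $2\mid d_E$. The clean lower-bound principle you should be using is that $\#E(\mathcal{K})_{\mathrm{tors}}$ divides $\#E(\mathbb{F}_p)$ for all split primes of good reduction (injectivity of reduction on prime-to-$p$ torsion, plus $\mathcal{O}_{\mathcal{K}}/\mathfrak{p}\cong\mathbb{F}_p$); this single statement reproduces $12$, $8$, $4$, $3$, $2$ in the table directly, with the extra units of $\mathbb{Z}[\rho]$ and $\mathbb{Z}[i]$ entering only indirectly, as the reason the quartic and sextic twist families exist and have large $\mathcal{K}$-rational torsion. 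Your matching upper bound (gcd of $\#E(\mathbb{F}_p)$ over a few explicit small split primes) is sound as stated. Finally, be aware that what you have written is a plan rather than a proof: the entire content of the proposition is the deferred twist-by-twist casework over the thirteen families, and ``six families'' undercounts the table.
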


\vskip .25 in

\section{Densities Expressions} \label{sec8}
The product expression appearing in Conjecture \ref{conj800.1}, id est,
\begin{equation}
P_0=\prod_{p\geq 2} \left (1 -\frac{p^2-p-1}{(p-1)^3(p+1)}\right ) \approx 0.505166168239435774,
\end{equation}  
is the basic the average density of prime orders, it was proved in \cite{KN88}, and very recently other proofs are given in \cite[Theorem 1 ]{BC11}, \cite{JN10}, \cite{LS14}, et alii. The actual density has a slight dependence on the elliptic curve $E$ and the point $P$. The determination of the dependence is classified into several cases depending on the torsion groups $E(\mathbb{Q})_{\text{tors}}$, and other parameters. \\

\begin{lem} \label{lem800.11} {\normalfont (\cite[Proposition 4.2]{ZD09})}. Let $E:f(x,y)=0$ be a Serre curve over the rational numbers. Let $D$ be the discriminant of the numbers field 
$\mathbb{Q}(\sqrt{\Delta})$, where $\Delta$ is the discriminant of any Weierstrass model of $E$ over $\mathbb{Q}$. If $d_E=1$, then

\begin{equation} 
\delta(1,E)=	
\begin{cases}
\displaystyle \left (1+ \prod_{q|D} \frac{1}{q^3-2q^2-q+3}\right )\prod_{p\geq 2} \left (1 -\frac{p^2-p-1}{(p-1)^3(p+1)}\right ) 
& \text{ if } D \equiv 1 \bmod 4;\\
\displaystyle \prod_{p\geq 2} \left (1 -\frac{p^2-p-1}{(p-1)^3(p+1)}\right ) & \text{ if } D \equiv 0 \bmod 4.
\end{cases}
\end{equation}
\end{lem}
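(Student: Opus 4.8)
The plan is to recover Zywina's computation by realizing $\delta(1,E)$ as a Chebotarev average over the adelic Galois image and then isolating the correction produced by the single entanglement relation that defines a Serre curve. I would begin with the local divisibility dictionary: since $\#E(\F_p)=\det(1-\mathrm{Frob}_p)$ on the Tate module, one has $\ell\mid\#E(\F_p)$ exactly when $\mathrm{Frob}_p\bmod\ell$ has $1$ as an eigenvalue. Writing $G_\ell=\mathrm{Im}(\rho_\ell)\subseteq\mathrm{GL}_2(\F_\ell)$ and letting $\delta_\ell$ be the proportion of $g\in G_\ell$ with eigenvalue $1$, the Koblitz sieve (the hypothesis $d_E=1$ removing any global obstruction) presents the conjectural density as the normalized Euler product $\prod_\ell(1-\delta_\ell)/(1-1/\ell)$, the denominators comparing against divisibility of a random integer. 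For a Serre curve every $\rho_\ell$ is surjective, so I would count in $\mathrm{GL}_2(\F_\ell)$: the bijection $g\mapsto g-1$ shows there are $\ell^3+\ell^2-\ell$ matrices with $\det(g-1)=0$, of which $\ell^2+\ell$ are non-invertible, leaving $\ell^3-2\ell$ invertible matrices with eigenvalue $1$. Hence $\delta_\ell=(\ell^2-2)/((\ell-1)^2(\ell+1))$ and the local factor equals $1-(\ell^2-\ell-1)/((\ell-1)^3(\ell+1))$, identifying the naive product with $P_0$.

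The heart of the matter is the entanglement. By Serre's structure theorem the adelic image of a Serre curve is the index-two subgroup $G=\ker\epsilon$, where $\epsilon=\epsilon_2\cdot\chi_D$ is the product of the sign character of $\mathrm{GL}_2(\F_2)\cong S_3$ with the Kronecker character $\chi_D$ (a character of $\det$, i.e.\ of the cyclotomic component), reflecting $\Q(\sqrt\Delta)=\Q(\sqrt D)\subseteq\Q(E[2])\cap\Q(\zeta_{|D|})$. I would average the multiplicative good indicator $F=\prod_\ell F_\ell$ over $G$ using $\mathbf 1_{\ker\epsilon}=(1+\epsilon)/2$; since both $F$ and $\epsilon$ factor through the primes, the average splits into the full product (giving $P_0$) plus an $\epsilon$-twisted product. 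As $\epsilon_\ell$ is trivial outside the entangled set $S=\{2\}\cup\{q:q\mid D\}$, the twisted product equals $P_0\cdot\prod_{\ell\in S}\langle F_\ell\epsilon_\ell\rangle/\langle F_\ell\rangle$, so that $\delta(1,E)=P_0\bigl(1+\prod_{\ell\in S}\langle F_\ell\epsilon_\ell\rangle/\langle F_\ell\rangle\bigr)$.

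It then remains to evaluate the twisted local averages. For odd $q\mid D$ the twist is the Legendre symbol of the determinant, and a conjugacy-class count in $\mathrm{GL}_2(\F_q)$ gives $\sum_{g\text{ without eigenvalue }1}\left(\frac{\det g}{q}\right)=q$, that is $\langle F_q\epsilon_q\rangle=1/((q-1)^2(q+1))$ and the ratio $1/(q^3-2q^2-q+3)$. At $\ell=2$ I would split on the parity of $D$: when $D\equiv1\bmod4$ the discriminant is odd, $\epsilon_2$ is just the sign character, and the $S_3$-computation shows the twisted and untwisted averages over the three-cycles coincide, so the $2$-ratio is $1$ and the correction is exactly $\prod_{q\mid D}1/(q^3-2q^2-q+3)$. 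When $D\equiv0\bmod4$ the prime $2$ ramifies in $\Q(\sqrt D)$, the $2$-component of $\chi_D$ is nontrivial and must be combined with $\epsilon_2$ at level $2^k$, and the resulting twisted average over $\mathrm{GL}_2(\Z/2^k)$ vanishes, killing the whole correction and forcing $\delta(1,E)=P_0$.

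The main obstacle is precisely this last local analysis at $2$: pinning down the exact entanglement character $\epsilon$ (the Serre-curve structure input) and proving the vanishing of the $\epsilon$-twisted average over $\mathrm{GL}_2(\Z/2^k)$ in the even-discriminant case, which is the source of the dichotomy in the statement. The sieve bookkeeping — justifying the passage from ``coprime to all small primes'' to ``prime'' together with the uniform $\prod(1-1/\ell)^{-1}$ normalization — is also delicate but standard, while the odd-prime character sums and the final assembly of the Euler product are routine once $\epsilon$ has been correctly decomposed.
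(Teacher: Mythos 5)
The paper offers no proof of this lemma: it is imported verbatim from Zywina \cite[Proposition 4.2]{ZD09} and used as a black box, so there is no internal argument to compare yours against. Judged on its own terms, your reconstruction is faithful to Zywina's actual computation and the arithmetic checks out: the count of invertible matrices in $\mathrm{GL}_2(\mathbb{F}_\ell)$ with eigenvalue $1$ is indeed $\ell^3-2\ell$ (the $\ell^2+\ell$ you subtract being exactly the semisimple class with characteristic polynomial $x^2-x$), which gives $\delta_\ell=(\ell^2-2)/((\ell-1)^2(\ell+1))$ and, after dividing by $1-1/\ell$, the stated local factor $1-(\ell^2-\ell-1)/((\ell-1)^3(\ell+1))$; the twisted sum $\sum_{g \text{ with eigenvalue } 1}\left(\tfrac{\det g}{q}\right)=-q$ (hence $+q$ over its complement) follows from the conjugacy-class decomposition and yields the ratio $1/(q^3-2q^2-q+3)$; and at $\ell=2$ the two elements of order $3$ in $S_3$ are even, so the sign-twisted and untwisted averages agree, giving ratio $1$ when $D$ is odd. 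Two steps carry the real weight and are only asserted, though you flag both honestly: the vanishing of the $2$-adic twisted average when $D\equiv 0\bmod 4$ (which does hold — the Koblitz condition at $2$ and $\epsilon_2$ depend only on $g\bmod 2$ while $\det$ equidistributes over $(\mathbb{Z}/2^k)^\ast$ on each fiber of reduction, so the nontrivial $2$-part of $\chi_D$ averages to zero), and the justification that the conjectural constant, defined as a limit over the adelic image, factors through your $\tfrac{1}{2}(1+\epsilon)$ decomposition into $P_0$ plus the $\epsilon$-twisted product. With those two points written out, your sketch is a complete proof along Zywina's lines; it is in any case far more than the paper provides, since the paper proves nothing here.
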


\section{Elliptic Brun Constant}
Assuming the generalized Riemann hypothesis, the upper bound 
\begin{equation}\label{800-648}
\pi(x,E,t)\ll \frac{x}{\log^2 x}
\end{equation}
was proved in \cite{CA05} and \cite{ZD08}. In addition, the unconditional upper bound 
\begin{equation}\label{800-650}
\pi(x,E,t)\ll \frac{x}{(\log x)(\log \log \log x)}
\end{equation}
for elliptic curves with complex multiplication was proved in \cite[Proposition 7]{CA05}. Later, the same upper bound for elliptic curves without complex multiplication was proved in \cite[Theorem 1.3]{ZD08}. An improved version for any elliptic curve over the rational numbers is proved here.

\begin{lem}  \label{800-22} For any large number $x \geq 1$ and any elliptic curves $E:f(X,Y)=0$ of discriminant $\Delta\ne 0$,
	\begin{equation}\label{800-700}
	\pi(x,E) \leq \frac{12x}{\log^2x}.
	\end{equation}
\end{lem}

\begin{proof} The number of such elliptic primitive primes has the asymptotic formula
\begin{eqnarray} \label{800-860}
\pi(x,E)&=&\sum_{ \substack{p \leq x \\ \ord_E(P)=n \text{ prime}}} 1
\\
&=&\sum _{p\leq x} \frac{1}{4 \sqrt{p}}\sum _{p-2\sqrt{p}\leq n\leq p+2\sqrt{p}}\frac{\Lambda(n)}{\log n}\cdot  \Psi_E (P) \nonumber,
\end{eqnarray} 
where $\Psi_E(P)$ is the charateritic function of primitive points $P \in E(\mathbb{Q})$. This is obtained from the summation of the elliptic primitive primes density function over the interval $[1,x]$, see (\ref{800-500}).\\

Since $\Psi_E (P)=0,1$, the previous equation has the upper bound
\begin{eqnarray} \label{800-870}
\pi(x,E)
&\leq &\sum_{p\leq x} \frac{1}{4 \sqrt{p}}\sum _{p-2\sqrt{p}\leq n\leq p+2\sqrt{p}}\frac{\Lambda(n)}{\log n}\nonumber\\
&\leq &\sum_{p\leq x} \frac{1}{4 \sqrt{p}}\sum _{p-2\sqrt{p}\leq q\leq p+2\sqrt{p}}1\nonumber,
\end{eqnarray} 
where $q$ ranges over the primes in the short interval $[p-2\sqrt{p}, p+2\sqrt{p}]$. The inner sum is estimated using either the explicit formula or Brun-Titchmarsh theorem. The later result states that the number of primes $p$ in the short interval $[x,x+4\sqrt{x}]$ satisfies the inequality
\begin{equation} \label{800-40}
\pi(x+4\sqrt{x})-\pi(x) \leq \frac{3 \cdot 4\sqrt{x}}{ \log x},
\end{equation}
see \cite[p.\  167]{IK04}, \cite[Theorem 3.9]{MV07}, and \cite[p.\  83]{TG15}, and similar references. Replacing (\ref{800-40}) into (\ref{800-870}) yields

\begin{eqnarray} \label{800-880}
\sum_{p\leq x} \frac{1}{4 \sqrt{p}}\sum _{p-2\sqrt{p}\leq q\leq p+2\sqrt{p}}1 &\leq & \sum_{p\leq x} \frac{1}{4 \sqrt{p}} \left ( \frac{3 \cdot 4\sqrt{p}}{\log p} \right )\\
&\leq & 3\sum_{p\leq x} \frac{1}{\log p}\nonumber\\ 
&\leq&12 \frac{x}{\log^2 x}\nonumber.
\end{eqnarray} 
The last inequality follows by partial summation and the prime number theorem $\pi(x)= x/\log x+O(x/\log^2 x)$.
\end{proof}

This result facilitates the calculations of a new collection of constants associated with elliptic curves. The best known results have established the sum 
\begin{equation}\label{800-79}
\sum_{\substack{p \leq x\\ \#E(\mathbb{F}_p)=\text{prime}}}p^{-1}\ll \log \log \log x,
\end{equation}
but does not establish the convergence of this series, see \cite{CA05} and \cite{ZD08}. The convergence of this series is proved below.

\begin{cor} For any elliptic curve $E$, the elliptic Brun constant
	\begin{equation}\label{800-760}
	\sum_{\substack{p \geq 2\\ \#E(\mathbb{F}_p)=\text{prime}}} \frac{1}{p}< \infty
	\end{equation}
	converges.
\end{cor}

\begin{proof} Use the prime counting measure $\pi(x,E,t)\leq 6 x/\log^2 x+O(x/\log^3 x)$ in Theorem \ref{thm800.1} to evaluate the infinite sum
\begin{eqnarray}\label{800-333}
\sum_{\substack{p \geq 2\\ \#E(\mathbb{F}_p)=\text{prime}}} \frac{1}{p} &=& \int_2^{\infty}\frac{1}{z} d \pi(z,E,t) \nonumber \\
&=&O(1)+\int_2^{\infty}\frac{\pi(z,E,t)}{z^2} d z \\
&<& \infty \nonumber 
\end{eqnarray}
as claimed.
\end{proof} 

\begin{cor} For any elliptic curve $E$, the elliptic Brun constant
	\begin{equation}\label{800-760}
	\sum_{\substack{p \geq 2\\ \#E(\mathbb{F}_p)=\text{prime}}} \frac{1}{p}< \infty
	\end{equation}
	converges.
\end{cor}

\begin{proof} Use the prime counting measure $\pi(x,E)\leq 6 x/\log^2 x+O(x/\log^3 x)$ in Lemma \ref{800-22} to evaluate the infinite sum
\begin{eqnarray}\label{800-333}
\sum_{\substack{p \geq 2\\ \#E(\mathbb{F}_p)=\text{prime}}} \frac{1}{p} &=& \int_2^{\infty}\frac{1}{t} d \pi(t,E) \nonumber \\
&=&O(1)+\int_2^{\infty}\frac{\pi(t,E)}{t^2} d t \\
&<& \infty \nonumber 
\end{eqnarray}
as claimed.
\end{proof} 

The elliptic Brun constants and the numerical data for the prime orders of a few elliptic curves were compiled. The last example shows the highest density of prime orders. Accorddingly, it has the largest constant.\\   

\begin{exa}  { \normalfont  The nonsingular Bachet elliptic curve $E: y^2=x^3+2$ over the rational numbers has complex multiplication by $\mathbb{Z}[\rho]$, and nonzero rank $\rk(E)=1$. The data for $p \leq 1000$ with prime orders $n$ are listed on the Table \ref{t801}; 
and the elliptic Brun constant is
\begin{equation}\label{800-41}
\sum_{\substack{p \geq 2\\ \#E(\mathbb{F}_p)=\text{prime}}} \frac{1}{p}=.520067922 \ldots.
\end{equation}

\begin{table} \label{t801}
\begin{center}
\begin{tabular}{||c|c|c|c|c|c|c|c|c|c|c||}
	\hline 
	\rule[-1ex]{0pt}{2.5ex} $p$ & 3 & 13 & 19 & 61 & 67 & 73 & 139 & 163 & 211 & 331 \\ 
	\hline 
	\rule[-1ex]{0pt}{2.5ex} $n$ & 3 & 19 & 13 & 61 & 73 & 81 & 163 & 139 & 199 & 331 \\ 
	\hline 
	\rule[-1ex]{0pt}{2.5ex} $p$ & 349 & 541 & 547 & 571 & 613 & 661 & 757 & 829 & 877 &  \\ 
	\hline 
	\rule[-1ex]{0pt}{2.5ex} $n$ & 313 & 571 & 571 & 541 & 661 & 613 & 787 & 823 & 937 &  \\ 
	\hline 
\end{tabular} 
\end{center}
\caption{\label{859} Prime Orders $n=\#E(\mathbb{F}_p)$  modulo $p$ for $y^2=x^3+2$.}
\end{table}	
}
\end{exa}

\begin{exa}  { \normalfont  The nonsingular elliptic curve $E: y^2=x^3+ 6x-2$ over the rational numbers has no complex multiplication and zero rank $\rk(E)=0$. The data for $p \leq 1000$ with prime orders $n$ are listed on the Table \ref{t804}; 
and the elliptic Brun constant is
\begin{equation}\label{800-72}
\sum_{\substack{p \geq 2\\ \#E(\mathbb{F}_p)=\text{prime}}} \frac{1}{p}=.186641187 \ldots.
\end{equation}

\begin{table} \label{t804}
	\begin{center}
		\begin{tabular}{||c|c|c|c|c|c|c|c|c|c|c||}
			\hline 
			\rule[-1ex]{0pt}{2.5ex} $p$ & 3 & 7 & 97 & 103 & 181 & 271 & 313 & 367 & 409 & 487 \\ 
			\hline 
			\rule[-1ex]{0pt}{2.5ex} $n$ & 3 & 7 & 97 & 107 & 163 & 293 & 331 & 383& 397 & 499 \\ 
			\hline 
			\rule[-1ex]{0pt}{2.5ex} $p$ & 883 &  967&  & &  &  &  &  &  &  \\ 
			\hline 
			\rule[-1ex]{0pt}{2.5ex} $n$ & 853 & 941 &  & &  &  & &  &  &  \\ 
			\hline 
		\end{tabular} 
	\end{center}
	\caption{\label{869} Prime Orders $n=\#E(\mathbb{F}_p)$  modulo $p$ for $y^2=x^3+6x-2$.}
\end{table}	
}
\end{exa}

\begin{exa}   { \normalfont  The nonsingular elliptic curve $E: y^2=x^3-x$ over the rational numbers has complex multiplication by $\mathbb{Z}[\rho]$, and nonzero rank $\rk(E)=0$. The data for $p \leq 1000$ with prime orders $n/4$ are listed on the Table \ref{t806}; 
	and the elliptic Brun constant is
	\begin{equation}\label{800-73}
	\sum_{\substack{p \geq 2\\ \#E(\mathbb{F}_p)/4=\text{prime}}} \frac{1}{p}=.549568584 \ldots.
	\end{equation}
	
	\begin{table} \label{t806}
		\begin{center}
\begin{tabular}{||c|c|c|c|c|c|c|c|c|c|c||}
	\hline 
	\rule[-1ex]{0pt}{2.5ex} $p$ & 5 & 7 & 11 & 19 & 43 & 67 & 163 & 211 & 283 & 331 \\ 
	\hline 
	\rule[-1ex]{0pt}{2.5ex} $n$ & 2 & 2 & 3 & 5 & 11 & 17 & 41 & 53& 71 & 83 \\ 
	\hline 
	\rule[-1ex]{0pt}{2.5ex} $p$ & 523 & 547&691  &787 &907  &  &  &  &  &  \\ 
	\hline 
	\rule[-1ex]{0pt}{2.5ex} $n$ & 131 & 137 &173  &197 &227  &  & &  &  &  \\ 
	\hline 
\end{tabular}  
		\end{center}
		\caption{Prime Orders $n/4=\#E(\mathbb{F}_p)/4$  modulo $p$ for $y^2=x^3-x$.}
	\end{table}	
}	
\end{exa}

\begin{exa}   { \normalfont  The nonsingular elliptic curve $E: y^2=x^3-x$ over the rational numbers has complex multiplication by $\mathbb{Z}[\rho]$, and nonzero rank $\rk(E)=0$. The data for $p \leq 1000$ with prime orders $n/8$ are listed on the Table \ref{t808}; 
	and the elliptic Brun constant is
	\begin{equation}\label{800-77}
	\sum_{\substack{p \geq 2\\ \#E(\mathbb{F}_p)/8=\text{prime}}} \frac{1}{p}=.2067391731 \ldots.
	\end{equation}
	
	\begin{table} \label{t808}
		\begin{center}
			\begin{tabular}{||c|c|c|c|c|c|c|c|c|c|c||}
				\hline 
				\rule[-1ex]{0pt}{2.5ex} $p$&17 & 23 &29 &37 & 53 & 101 & 103 &109&149 &151  \\ 
				\hline 
				\rule[-1ex]{0pt}{2.5ex} $n$ & 2 & 3 & 5 & 5 & 5 & 13 & 13 & 13 & 17 & 19 \\ 
				\hline 
				\rule[-1ex]{0pt}{2.5ex} $p$ &157 &277  &293  &317  &389  &487 &541  &631  &661  &701  \\ 
				\hline 
				\rule[-1ex]{0pt}{2.5ex} $n$ &17  &37  &37  &41  &37  &53  &61  &73  &79  &89  \\ 			\hline 
			\rule[-1ex]{0pt}{2.5ex} $p$ &757 & 773 &797  &821  &823  &829 &853  &  &  &  \\ 
			\hline 
			\rule[-1ex]{0pt}{2.5ex} $n$ & 97 &101  &97  &109&103  &97  &101 &  &  &  \\ 
			 \hline 
				 \hline 
			\end{tabular} 
		\end{center}
		\caption{Prime Orders $n/8=\#E(\mathbb{F}_p)/8$  modulo $p$ for $y^2=x^3-x$.}
	\end{table}	
}	
\end{exa}

\begin{exa}   { \normalfont The nonsingular Bachet elliptic curve $E: y^2=x^3+1$ over the rational numbers has complex multiplication by $\mathbb{Z}[\rho]$, and nonzero rank $\rk(E)=?1$. The data for $p \leq 1000$ with prime orders $n/12$ are listed on the Table \ref{t820}; 
	and the elliptic Brun constant is
	\begin{equation}\label{800-71}
	\sum_{\substack{p \geq 2\\ \#E(\mathbb{F}_p)/12=\text{prime}}} \frac{1}{p}=.5495685884 \ldots.
	\end{equation}
	
	\begin{table} \label{t820}
		\begin{center}
			\begin{tabular}{||c|c|c|c|c|c|c|c|c|c|c||}
				\hline 
				\rule[-1ex]{0pt}{2.5ex} $p$ & 31 & 43 & 59 & 67 & 73 & 79 & 97 & 103 &131 & 139 \\ 
				\hline 
				\rule[-1ex]{0pt}{2.5ex} $n$ & 3 & 3 &5 & 7 & 7 & 7 & 7 &7 & 11 & 13 \\ 
				\hline 
				\rule[-1ex]{0pt}{2.5ex} $p$ & 151 & 163 & 181 & 199 & 227 & 241 & 337 & 367 & 379 &409  \\ 
				\hline 
				\rule[-1ex]{0pt}{2.5ex} $n$ & 13 &13 &13 & 19 & 19 & 19 & 31 & 31 & 31 &31  \\ 
				\hline 
				\rule[-1ex]{0pt}{2.5ex} $p$ & 421 &443 & 463 & 487 & 491 & 523 & 563 & 709 & 751 &787  \\ 
				\hline 
				\rule[-1ex]{0pt}{2.5ex} $n$ & 37 &37 &37 &37 &41 &43 & 47 & 61 & 67 &61  \\ 
				\hline 
				\rule[-1ex]{0pt}{2.5ex} $p$ & 823 & 829 & 859& 883 & 907 & 947 & 967 & 991 &  &  \\ 
				\hline 
				\rule[-1ex]{0pt}{2.5ex} $n$ & 73 &73 &67 & 73 & 79 & 79 & 79 & 79 &  &  \\ 
				\hline   
			\end{tabular} 
		\end{center}
		\caption{ Prime Orders $n/12=\#E(\mathbb{F}_p)/12$  modulo $p$ for $y^2=x^3+1$.}
	\end{table}	
}	
\end{exa}	

\newpage
\section{Prime Orders $n$} \label{sec7}
The characteristic function for primitive points in the group of points $E(\mathbb{F}_p)$ of an elliptic curve $E:f(X,Y)=0$ has the representation
\begin{equation} \label{800-29}
\Psi_E(P)=
\left \{\begin{array}{ll}
1 & \text{ if } \ord_E (P)=n,  \\
0 &  \text{ if } \ord_E (P) \ne n. \\
\end{array} \right.
\end{equation} 
The parameter $n=\# E(\mathbb{F}_p)$ is the size of the group of points, and the exact formula for $\Psi_E (P)$ is given in Lemma \ref{lem3.7}.\\ 

Since each order $n$ is unique, the weighted sum
\begin{equation} \label{800-500}
\frac{1}{4 \sqrt{p}}\sum _{p-2\sqrt{p}\leq n\leq p+2\sqrt{p}}\frac{\Lambda(n)}{\log n} \cdot \Psi_E (P) 
=\left \{\begin{array}{ll}
\displaystyle \frac{1}{4 \sqrt{p}} \cdot \frac{\Lambda(n)}{\log n} & \text{ if } \ord_E (P)=n \text{ and } n=q^k,  \\
0 &  \text{ if } \ord_E (P) \ne n \text{ or } n\ne q^k, \\
\end{array} \right.
\end{equation}
where $n=q^k,k\geq 1$, is a prime power, is a discrete measure for the density of elliptic primitive primes $p \geq 2$ such that $P \in E(\mathbb{\overline{Q}})$ is a primitive point of prime power order $n \in [p-2\sqrt{p}, p+2\sqrt{p}]$.\\

\begin{proof} \text{(Theorem \ref{thm800.1}).} Let $\langle P \rangle= E(\mathbb{F}_p)$ for at least one large prime $p \leq x_0$, and let $x \geq x_0 \geq 1$ be a large number. Suppose that \(P\not \in E(\mathbb{Q})_{\text{tors}} \) is not a primitive point of prime order $\ord_E(P)=n$ in $E(\mathbb{F}_p)$ for all primes \(p\geq x\). Then, the sum of the elliptic primes measure over the short interval \([x,2x]\) vanishes. Id est, 
\begin{equation} \label{800-510}
0=\sum _{x \leq p\leq 2x} \frac{1}{4 \sqrt{p}}\sum _{p-2\sqrt{p}\leq n\leq p+2\sqrt{p}} \frac{\Lambda(n)}{\log n} \cdot\Psi_E (P).
\end{equation}
Replacing the characteristic function, Lemma \ref{lem3.7}, and expanding the nonexistence equation (\ref{800-510}) yield
\begin{eqnarray} \label{800-520}
0&=&\sum _{x \leq p\leq 2x}  \frac{1}{4 \sqrt{p}}\sum _{p-2\sqrt{p}\leq  n\leq p+2\sqrt{p}}
 \frac{\Lambda(n)}{\log n} \cdot \Psi_E (P) \nonumber\\
&=&\sum _{x \leq p\leq 2x}  \frac{1}{4 \sqrt{p}} \sum _{p-2\sqrt{p}\leq  n\leq p+2\sqrt{p}}  \frac{\Lambda(n)}{\log n} \left (\frac{1}{n}\sum_{\gcd(m,n)=1} 
\sum_{ 0 \leq r \leq n-1} \chi ((mT-P)r ) \right ) \nonumber \\
&=&\delta(d_E,E)\sum _{x \leq p\leq 2x}  \frac{1}{4 \sqrt{p}} \sum _{p-2\sqrt{p}\leq  n\leq p+2\sqrt{p}} \frac{\Lambda(n)}{\log n}\cdot \frac{1}{n}\sum_{\gcd(m,n)=1}1
\\
& & \qquad+ \sum _{x \leq p\leq 2x}  \frac{1}{4 \sqrt{p}} \sum _{p-2\sqrt{p}\leq  n\leq p+2\sqrt{p}}  \frac{\Lambda(n)}{ \log n}\cdot \frac{1}{n}\sum_{\gcd(m,n)=1} 
\sum_{ 1 \leq r \leq n-1} \chi ((mT-P)r )\nonumber \\
&=&\delta(d_E,E)M(x) + E(x) \nonumber,
\end{eqnarray} 
where $\delta(d_E,E)\geq0$ is a constant depending on both the fixed elliptic curve $E:f(X,Y)=0$ and the integer divisor $d_E$. \\

The main term $M(x)$ is determined by a finite sum over the principal character \(\chi =1\), and the error term $E(x)$ is determined by a finite sum over the nontrivial multiplicative characters \(\chi \neq 1\).\\

Applying Lemma \ref{lem800.1} to the main term, and Lemma \ref{lem800.3} to the error term yield
\begin{eqnarray} \label{800-530}
\sum _{x \leq p\leq 2x} \frac{1}{4 \sqrt{p}}\sum _{p-2\sqrt{p}\leq n\leq p+2\sqrt{p}} \frac{\Lambda(n)}{\log n}\cdot \Psi_E (P)
&=&\delta(d_E,E)M(x) + E(x) \\
&\gg & \delta(d_E,E)\frac{x}{ \log^2 x} \left (1 +O \left (\frac{x}{\log x} \right ) \right ) \nonumber \\
&& \qquad \qquad +O\left (x^{1/2 }\right) \nonumber \\
&\gg&  \delta(d_E,E)\frac{x}{ \log^2 x} \left (1 +O \left (\frac{x}{\log x} \right ) \right ) \nonumber.
\end{eqnarray} 
But, if $\delta(d_E,E)>0$, the expression 
\begin{eqnarray} \label{800-540}
\sum _{x \leq p\leq 2x} \frac{1}{4 \sqrt{p}}\sum _{p-2\sqrt{p}\leq n\leq p+2\sqrt{p}} \frac{\Lambda(n)}{\log    n} \cdot \Psi_E (P)
&\gg&  \delta(d_E,E) \frac{x}{ \log^2 x} \left (1 +O \left (\frac{x}{\log x} \right ) \right ) \nonumber\\
&>&0,
\end{eqnarray} 
contradicts the hypothesis  (\ref{800-510}) for all large numbers $x \geq x_0$. Ergo, there are infinitely many primes $p\geq x $ such that a fixed elliptic curve of rank $\rk(E)>0$ with a primitive point $P$ of infinite order, for which the corresponding groups $E(\mathbb{F}_p)$ have prime orders. Lastly, the number of such elliptic primitive primes has the asymptotic formula
\begin{eqnarray} \label{800-560}
\pi(x,E)&=&\sum_{ \substack{p \leq x \\ \ord_E(P)=n \text{ prime}}} 1
 \nonumber \\
&=&\sum _{p\leq x} \frac{1}{4 \sqrt{p}}\sum _{p-2\sqrt{p}\leq n\leq p+2\sqrt{p}}\frac{\Lambda(n)}{\log n}\cdot  \Psi_E (P)    \\
&\geq&\delta(d_E,E) \frac{x}{ \log^2 x} \left (1+O \left (\frac{x}{\log x} \right )\right )\nonumber,
\end{eqnarray} 
which is obtained from the summation of the elliptic primitive primes density function over the interval $[1,x]$. 
\end{proof}

\section{Examples Of Elliptic Curves}
 The densities of several elliptic curves have been computed by several authors. Extensive calculations for some specific densities are given in \cite{ZD09}.\\

\begin{exa}   { \normalfont  The nonsingular Bachet elliptic curve $E: y^2=x^3+2$ over the rational numbers has complex multiplication by $\mathbb{Z}[\rho]$, and nonzero rank $\rk(E)=1$. It is listed as 1728.n4 in \cite{LMFDB}. The numerical data shows that $\# E(\mathbb{F}_p)=n$ is prime for at least one prime, see Table \ref{t801}. Hence, by Theorem \ref{thm800.1}, the corresponding group of $\mathbb{F}_p$-rational points $\# E(\mathbb{F}_p)$ has prime orders $n=\# E(\mathbb{F}_p)$ for infinitely many primes $p \geq 3$. \\

Since $\Delta=-2^6 \cdot 3^3$, the discriminant of the quadratic field $\mathbb{Q}(\sqrt{\Delta})$ is $D=-3$. Moreover, the integer divisor $d_E=1$ since $\# E(\mathbb{F}_p)$ is prime for at least one prime. Thus, applying Lemma \ref{lem800.11}, gives the natural density
\begin{equation}
\delta(1,E)=\frac{10}{9} P_0 \approx 0.5612957424882619712979385 \ldots,
\end{equation}
The predicted number of elliptic primes $p \nmid 6N$ such that $\# E(\mathbb{F}_p)$ is prime has the asymptotic 
formula
\begin{equation}
\pi(x,E)=\delta(1,E)\int_2^x \frac{1}{\log(t+1)} \frac{dt}{t}.
\end{equation}

A lower bound for the counting function is
\begin{equation}
\pi(x,E)\geq \delta(1,E) \frac{x}{\log^ 3 x} \left (1+ O\left (\frac{1}{\log x} \right ) \right ),
\end{equation}
see Theorem \ref{thm800.1}.\\

\begin{table}
\begin{center}
\begin{tabular}{||l|r c l||} 
		\hline
		\textbf{Invariant} & \textbf{Value}&&   \\ [1ex]  
		\hline\hline
		Discriminant  &$\Delta$&=&$-16(4a^3+27b^2)=-1728$\\ 
		\hline
		Conductor& $N$&=&$1728 $\\
		\hline
		j-Invariant &$j(E)$&=&$ (-48b)^3/\Delta=0$\\
		\hline
		Rank &$\rk(E)$&=& $1$ \\ 
		\hline
Special $L$-Value &$L^{
'}(E,1)$&$\approx$&$ 2.82785747365$\\
	\hline
		Regulator &$R$&=&$.754576$ \\ 
		\hline
		Real Period &$\Omega$&=&$5.24411510858$ \\ 
		\hline
		Torsion Group &$E(\mathbb{Q})_{\text{tors}}$&$\cong$&$\{ \mathcal{O} \}$ \\ 
		\hline
		Integral Points &$E(\mathbb{Z})$&=&$ \{ \mathcal{O} ,(-1,1);(-1,1)\}$ \\
		\hline
		Rational Group &$E(\mathbb{Q})$&$=$&$ \mathbb{Z}$ \\
		\hline
		Endomorphims Group &$End(E)$&=&$\mathbb{Z}[(1+\sqrt{-3})/2]$, CM \\
\hline
Integer Divisor  &$d_E$&=&$1$ \\
		\hline
\end{tabular}
\end{center}
\caption{\label{9600} Data for $y^2=x^3+2$.}
\end{table}	

The associated weight $k=2$ cusp form, and $L$-function are
\begin{equation}
f(s)=\sum_{n \geq 1}a_n q^n=q-q^7-5q^{13}+7q^{19}+ \cdots ,
\end{equation}

and
\begin{eqnarray}
L(s)&=&\sum_{n \geq 1}\frac{a_n}{n^s} \nonumber \\ &=& \prod_{p |N} \left ( 1-\frac{a_p}{p^s} \right )^{-1} \prod_{p \nmid N} \left ( 1-\frac{a_p}{p^s} +\frac{1}{p^{2s-1}}\right )^{-1} \\ 
&=& 1-\frac{1}{7^s}-\frac{5}{13^s}+\frac{7}{19^s}+ \cdots  \nonumber ,
\end{eqnarray}
where $q=e^{ i 2 \pi} $, respectively. The coefficients are generated using $a_p=p+1-\# E(\mathbb{F}_p)$, and the formulas
\begin{enumerate}
\item $a_{pq}=a_pa_q$  if $\gcd(p,q)=1$;
\item $a_{p^{n+1}}=a_{p^n}a_p-pa_{p^{n-1}}$  if $n \geq 2$.
\end{enumerate}

The corresponding functional equation is 
\begin{equation}
\Lambda(s)=\left ( \frac{\sqrt{N}}{2 \pi} \right ) ^s \Gamma(s) L(s)  \qquad \text{ and } \qquad \Lambda(s)=\Lambda(2-s),
\end{equation}
where $N=1728$, see \cite[p.\ 80]{KN93}.
}
\end{exa}

\begin{exa}   { \normalfont  The nonsingular elliptic curve $E: y^2=x^3+ 6x-2$ over the rational numbers has no complex multiplication and zero rank, it is listed as 1728.w1 in \cite{LMFDB}. The numerical data shows that $\# E(\mathbb{F}_p)=n$ is prime for at least one prime, see Table \ref{t804}. Hence, by Theorem \ref{thm800.1}, the corresponding group of $\mathbb{F}_p$-rational points $\# E(\mathbb{F}_p)$ has prime orders $n=\# E(\mathbb{F}_p)$ for infinitely many primes $p \geq 3$. \\

Since $\Delta=-2^6 \cdot 3^5$, the discriminant of the quadratic field $\mathbb{Q}(\sqrt{\Delta})$ is $D=-3$. Moreover, the integer divisor $d_E=1$ since $\# E(\mathbb{F}_p)$ is prime for at least one prime. Thus, applying Lemma \ref{lem800.11}, gives the natural density
\begin{equation}
\delta(1,E)=\frac{10}{9} P_0 \approx 0.5612957424882619712979385 \ldots,
\end{equation}
The predicted number of elliptic primes $p \nmid 6N$ such that $\# E(\mathbb{F}_p)$ is prime has the asymptotic 
formula
\begin{equation}
\pi(x,E)=\delta(1,E)\int_2^x \frac{1}{\log(t+1)} \frac{dt}{t}.
\end{equation}

A table for the prime counting function $\pi(1,E)$, for $2 \times 10^7 \leq x \leq 10^9$, and other information on 
this elliptic curve appears in \cite{ZD09}. \\

A lower bound for the counting function is
\begin{equation}
\pi(x,E)\geq \delta(1,E) \frac{x}{\log^ 3 x} \left (1+ O\left (\frac{1}{\log x} \right ) \right ),
\end{equation}
see Theorem \ref{thm800.1}.\\

\begin{table}
		\begin{center}
		\begin{tabular}{||l|r c l||} 
		\hline
		\textbf{Invariant} & \textbf{Value}&&   \\ [1ex]  
		\hline\hline
		Discriminant  &$\Delta$&=&$-16(4a^3+27b^2)=-2^6 \cdot 3^5$\\ 
		\hline
		Conductor& $N$&=&$2^6 \cdot 3^3  $\\
		\hline
		j-Invariant &$j(E)$&=&$ (-48b)^3/\Delta=2^9 \cdot 3$\\
		\hline
		Rank &$\rk(E)$&=& $0$ \\ 
		\hline
Special $L$-Value &$L(E,1)$&$\approx$&$ 2.24402797314$\\
	\hline
		Regulator &$R$&=&$ 1$ \\ 
		\hline
		Real Period &$\Omega$&=&$2.2440797314$ \\ 
		\hline
		Torsion Group &$E(\mathbb{Q})_{\text{tors}}$&=&$\{ \mathcal{O} \}$ \\ 
		\hline
		Integral Points &$E(\mathbb{Z})$&=&$ \{ \mathcal{O} \}$ \\
\hline
Rational Group &$E(\mathbb{Q})$&$=$&$ \{\mathcal{O}\}$ \\
		\hline
		Endomorphims Group &$End(E)$&=&$\mathbb{Z}$, nonCM \\
\hline
Integer Divisor  &$d_E$&=&$1$ \\
		\hline
		\end{tabular}
		\end{center}
\caption{\label{999} Data for $y^2=x^3+6x-2$.}
\end{table}

The associated weight $k=2$ cusp form, and $L$-function are
\begin{equation}
f(s)=\sum_{n \geq 1}a_n q^n=q+2q^5+q^7+2q^{11}-q^{13}-6q^{17}+5q^{19}+ \cdots ,
\end{equation}

and
\begin{eqnarray}
L(s)&=&\sum_{n \geq 1}\frac{a_n}{n^s} \nonumber \\ &=& \prod_{p |N} \left ( 1-\frac{a_p}{p^s} \right )^{-1} \prod_{p \nmid N} \left ( 1-\frac{a_p}{p^s} +\frac{1}{p^{2s-1}}\right )^{-1} \\ 
&=& 1+\frac{2}{5^s}+\frac{1}{7^s}+\frac{2}{11^s}-\frac{1}{13^s}-\frac{6}{17^s}+\frac{5}{19^s}+ \cdots  \nonumber ,
\end{eqnarray}
where $q=e^{ i 2 \pi} $, respectively. The coefficients are generated using $a_p=p+1-\# E(\mathbb{F}_p)$, and the formulas
\begin{enumerate}
\item $a_{pq}=a_pa_q$  if $\gcd(p,q)=1$;
\item $a_{p^{n+1}}=a_{p^n}a_p-pa_{p^{n-1}}$  if $n \geq 2$.
\end{enumerate}
The corresponding functional equation is 
\begin{equation}
\Lambda(s)=\left ( \frac{\sqrt{N}}{2 \pi} \right ) ^s \Gamma(s) L(s)  \qquad \text{ and } \qquad \Lambda(s)=\Lambda(2-s),
\end{equation}
where $N=1728$, see \cite[p.\ 80]{KN93}.
}
\end{exa}

\begin{exa}  { \normalfont The nonsingular elliptic curve $E: y^2=x^3-x$ over the rational numbers has complex multiplication by $\mathbb{Z}[i]$, and zero rank, it is listed as 32.a3  in \cite{LMFDB}. The numerical data shows that $\# E(\mathbb{F}_p)/8=n$ is prime for at least one prime, see (\ref{800-95}). Hence, by Theorem \ref{thm800.1}, the corresponding group of $\mathbb{F}_p$-rational points $\# E(\mathbb{F}_p)$ has prime orders $n=\# E(\mathbb{F}_p)$ for infinitely many primes $p \geq 3$. \\

Since $\Delta=-2^6$, the discriminant of the quadratic field $\mathbb{Q}(\sqrt{\Delta})$ is $D=-4$. Moreover, the integer divisor $d_E=8$ since $\# E(\mathbb{F}_p)/8$ is prime for at least one prime, see Table \ref{t808}. Thus, Lemma \ref{lem800.11} is not applicable. The natural density
\begin{equation}
\delta(8,E)=\frac{1}{2}\prod_{p\geq 3} \left (1 -\chi(p)\frac{p^2-p-1}{(p-\chi(p))(p-1)^2}\right )  \approx 0.5336675447 \ldots,
\end{equation}
where $\chi(n)=(-1)^{(n-1)/2}$, is computed in \cite[Lemma 7.1]{ZD09}. The predicted number of elliptic primes $p \nmid 6N$ such that $\# E(\mathbb{F}_p)$ is prime has the asymptotic 
formula
\begin{equation}
\pi(x,E)=\delta(8,E)\int_9^x \frac{1}{\log(t+1)- \log 8} \frac{dt}{\log t}.
\end{equation}

A table for the prime counting function $\pi(x,E)$, for $2 \times 10^7 \leq x \leq 10^9$, and other information on 
this elliptic curve appears in \cite{ZD09}. \\

A lower bound for the counting function is
\begin{equation}
\pi(x,E)\geq \delta(8,E) \frac{x}{\log^ 3 x} \left (1+ O\left (\frac{1}{\log x} \right ) \right ),
\end{equation}
see Theorem \ref{thm800.1}.\\

\begin{table}
\begin{center}
\begin{tabular}{||l|r c l||} 
		\hline
		\textbf{Invariant} & \textbf{Value}&&   \\ [1ex]  
		\hline\hline
		Discriminant  &$\Delta$&=&$-16(4a^3+27b^2)=-2^6$\\ 
		\hline
		Conductor& $N$&=&$2^5 $\\
		\hline
		j-Invariant &$j(E)$&=&$ (-48b)^3/\Delta=2^6 \cdot 3^3$\\
		\hline
		Rank &$\rk(E)$&=& $0$ \\ 
		\hline
Special $L$-Value &$L(E,1)$&$\approx$&$ .655514388573$\\
	\hline
		Regulator &$R$&=&$ 1$ \\ 
		\hline
		Real Period &$\Omega$&=&$5.24411510858$ \\ 
		\hline
		Torsion Group &$E(\mathbb{Q})_{\text{tors}}$&$\cong$&$\mathbb{Z}_2 \times \mathbb{Z}_2$ \\ 
		\hline
		Integral Points &$E(\mathbb{Z})$&=&$ \{ \mathcal{O} ,(\pm1,0);(0,0)\}$ \\
\hline
Rational Group &$E(\mathbb{Q})$&$=$&$ E(\mathbb{Q})_{\text{tors}}$ \\		
		\hline
		Endomorphims Group &$End(E)$&=&$\mathbb{Z}[i]$, CM \\
\hline
Integer Divisor  &$d_E$&=&$2,4,8$ \\
		\hline
\end{tabular}
\end{center}
\caption{\label{959} Data for $y^2=x^3-x$.}
\end{table}	

The associated weight $k=2$ cusp form, and $L$-function are
\begin{equation}
f(s)=\sum_{n \geq 1}a_n q^n=q+2q^5+q^7+2q^{11}-q^{13}-6q^{17}+5q^{19}+ \cdots ,
\end{equation}

and
\begin{eqnarray}
L(s)&=&\sum_{n \geq 1}\frac{a_n}{n^s} \nonumber \\ &=& \prod_{p |N} \left ( 1-\frac{a_p}{p^s} \right )^{-1} \prod_{p \nmid N} \left ( 1-\frac{a_p}{p^s} +\frac{1}{p^{2s-1}}\right )^{-1} \\ 
&=& 1+\frac{2}{5^s}+\frac{1}{7^s}+\frac{2}{11^s}-\frac{1}{13^s}-\frac{6}{17^s}+\frac{5}{19^s}+ \cdots  \nonumber ,
\end{eqnarray}
where $q=e^{ i 2 \pi} $, respectively. The coefficients are generated using $a_p=p+1-\# E(\mathbb{F}_p)$, and the formulas
\begin{enumerate}
\item $a_{pq}=a_pa_q$  if $\gcd(p,q)=1$;
\item $a_{p^{n+1}}=a_{p^n}a_p-pa_{p^{n-1}}$  if $n \geq 2$.
\end{enumerate}

The corresponding functional equation is 
\begin{equation}
\Lambda(s)=\left ( \frac{\sqrt{N}}{2 \pi} \right ) ^s \Gamma(s) L(s)  \qquad \text{ and } \qquad \Lambda(s)=\Lambda(2-s),
\end{equation}
where $N=1728$, see \cite[p.\ 80]{KN93}.
}
\end{exa}

\section{Exercises}

\textbf{Problem 1.} Assume random elliptic curve $E:f(x,y)=0$ has no CM, $P$ is a point of infinite order. Is the integer divisor $d_E < \infty$ bounded? This parameter is bounded for CM elliptic curves, in fact $d_E <24$, reference: \cite[Proposition 1]{JJ08}.\\

\textbf{Problem 2.} Assume the elliptic curve $E:f(x,y)=0$ has no CM, $P$ is a point of infinite order, and the density $\delta(1,E)>0$. What is the least prime $p\nmid 6N$ such that the integer divisor $d_E=1$? Reference: \cite[Corollary 1.2]{CA03}.\\

\textbf{Problem 3.} Fix an  elliptic curve $E:y^2=x^3+ax+b$ of rank $\rk(E)>0$, and CM; and $P$ is a point of infinite order. Let the integer divisor $d_E=4$. Assume the densities $\delta(1,E)>0$, $\delta(2,E)>0$, and $\delta(4,E)>0$ are defined. What is the arithmetic relationship between the densities?\\

\textbf{Problem 4.} Fix an  elliptic curve $E:y^2=x^3+ax+b$ of rank $\rk(E)>0$, and CM; and $P$ is a point of infinite order. Does $\#E(\mathbb{F}_p)=n$ prime for at least one large prime $p \geq 2$ implies that the integer divisor $d_E=1$?\\

\newpage


\begin{thebibliography}{999}
\bibitem{BC11} Balog, Antal; Cojocaru, Alina-Carmen; David, Chantal. \textit{\color{blue}Average twin prime conjecture for elliptic curves}. Amer.
		J. Math. 133,(2011), no. 5, 1179-1229.

\bibitem{BP75} Borosh, I.; Moreno, C. J.; Porta, H. \textit{\color{blue}Elliptic curves over finite fields. II}. Math. Comput. 29 (1975), 951-964. 

\bibitem{CA05} Cojocaru, Alina Carmen. \textit{\color{blue}Reductions of an elliptic curve with almost prime orders}. Acta Arith. 119 (2005), no. 3, 265-289. 

\bibitem{CA04} Cojocaru, Alina Carmen. \textit{\color{blue}Questions about the reductions modulo primes of an elliptic curve}.  Number theory,  61-79, CRM Proc. Lecture Notes, 36, Amer. Math. Soc., Providence, RI, 2004. 

\bibitem{CA03} Cojocaru, Alina Carmen. \textit{\color{blue}Cyclicity of CM elliptic curves modulo p}. Trans. Amer. Math. Soc. 355 (2003), no. 7, 2651-2662.

\bibitem{DL12} De Koninck, Jean-Marie; Luca, Florian. \textit{\color{blue}Analytic number theory. Exploring the anatomy of integers}. Graduate Studies in Mathematics, 134. American Mathematical Society, Providence, RI, 2012.

\bibitem{FI10} Friedlander, John; Iwaniec, Henryk. \textit{\color{blue}Opera de cribro}. American Mathematical Society Colloquium Publications, 57. American Mathematical Society, Providence, RI, 2010.

\bibitem{GM86} Gupta, Rajiv; Murty, M. Ram. \textit{\color{blue}Primitive points on elliptic curves}. Compositio Math.  58  (1986),  no. 1, 13-44.
 
\bibitem{GM90} Gupta, Rajiv; Murty, M. Ram. \textit{\color{blue}Cyclicity and generation of points mod p on elliptic curves}. Invent. Math. 101 (1990), no. 1, 225-235. 
		
\bibitem{HG07} Harman, Glyn. \textit{\color{blue}Prime-detecting sieves}. London Mathematical Society Monographs Series, 33. Princeton University Press, Princeton, NJ, 2007. 

\bibitem{IJ08} Iwaniec, Henryk; Jimenez Urroz, Jorge. \textit{\color{blue}Orders of CM elliptic curves modulo p with at most two primes}. Ann. Sc. Norm. Super. Pisa Cl. Sci. (5) 9 (2010), no. 4, 815-832.	
		
\bibitem{IK04} Iwaniec, Henryk; Kowalski, Emmanuel. \textit{\color{blue}Analytic number theory}. AMS Colloquium Publications,
		53. American Mathematical Society, Providence, RI, 2004.

\bibitem{JN09} Jones, Nathan. \textit{\color{blue}Averages of elliptic curve constants}. Math. Ann. 345 (2009), no. 3, 685-710. 

\bibitem{JJ08} Jimenez Urroz, Jorge. \textit{\color{blue}Almost prime orders of CM elliptic curves modulo p}. Algorithmic number theory, 74-87, Lecture Notes in Comput. Sci., 5011, Springer, Berlin, 2008.

\bibitem{JN10} Jones, Nathan. \textit{\color{blue}Almost all elliptic curves are Serre curves}. Trans. Amer. Math. Soc. 362 (2010), no. 3, 1547-1570.

\bibitem{KA93} Anatolij A. Karacuba. \textit{\color{blue}Basic Analytic Number Theory}, Springer-Verlag, 1993.

\bibitem{KN88} Koblitz, Neal. \textit{\color{blue}Primality of the number of points on an elliptic curve over a finite field}. Pacific J. Math.  131  (1988),  no. 1, 157-165.
		
\bibitem{KN93} Koblitz, Neal. \textit{\color{blue}Introduction to elliptic curves and modular forms}. Second edition. Graduate Texts in Mathematics, 97. Springer-Verlag, New York, 1993.

\bibitem{LT77} Lang, S.; Trotter, H. \textit{\color{blue}Primitive points on elliptic curves}. Bull. Amer. Math. Soc. 83 (1977), no. 2, 289-292.

\bibitem{LMFDB} The L-functions and modular forms database, www.lmfdb.org.		
			
\bibitem{LS14} Lenstra, H. W., Jr.; Stevenhagen, P.; Moree, P. \textit{\color{blue}Character sums for primitive root densities}. Math. Proc. Cambridge Philos. Soc. 157 (2014), no. 3, 489-511. And arXiv:1112.4816.

\bibitem{MG15} Giulio Meleleo. \textit{\color{blue}Questions Related To Primitive Points On Elliptic Curves And Statistics For Biquadratic Curves Over
Finite Fields}, Thesis, Universita degli Studi, Roma Tre, 2015.
				
\bibitem{MV07} Montgomery, Hugh L.; Vaughan, Robert C. \textit{\color{blue}Multiplicative number theory. I. Classical theory}. Cambridge University Press, Cambridge, 2007.

\bibitem{SA43}  Selberg, Atle. \textit{\color{blue}On the normal density of primes in small intervals, and the difference between consecutive primes.} Arch. Math. Naturvid. 47, (1943). no. 6, 87-105.

\bibitem{SJ09} Silverman, Joseph H. \textit{\color{blue}The arithmetic of elliptic curves}. Second edition. Graduate Texts in Mathematics, 106. Springer, Dordrecht, 2009.

\bibitem{SJ05} Silverman, Joseph H. \textit{\color{blue}The p-adic properties of division polynomials and elliptic divisibility sequences}. Math. Ann.  332  (2005),  no. 2, 443-471.
		
\bibitem{SV11} Shparlinski, Igor E.; Voloch, Jose Felipe. \textit{\color{blue}Generators of elliptic curves over finite fields}. Preprint 2011.

\bibitem{SZ03} Schmitt, Susanne; Zimmer, Horst G. \textit{\color{blue}Elliptic curves. A computational approach}. With an appendix by Attila Petho. de Gruyter 
Studies in Mathematics, 31. Walter de Gruyter $\&$ Co., Berlin, 2003. 

\bibitem{TG15} Tenenbaum, Gerald. \textit{\color{blue}Introduction to analytic and probabilistic number theory}. Translated from the Third French edition. American Mathematical Society, Rhode Island, 2015.

\bibitem{ZD09} Zywina, David. \textit{\color{blue}A refinement of Koblitz's conjecture}, arXiv:0909.5280.

\bibitem{ZD08} Zywina, David. \textit{\color{blue}The Large Sieve and Galois Representations}, arXiv:0812.222.

\bibitem{WN95} Watt, N. \textit{\color{blue}Short intervals almost all containing primes}. Acta Arith. 72 (1995), no. 2, 131-167.





\end{thebibliography}
\end{document}